\theoremstyle{plain}
\newtheorem{theorem}{\bf Theorem}[section]
\newtheorem{corollary}{\bf Corollary}[section]
\newtheorem{lemma}{\bf Lemma}[section]
\newtheorem{remark}{\bf Remark}[section]
\newtheorem{definition}{\bf Definition}[section]
\newtheorem{example}{\bf Example}[section]
\numberwithin{equation}{section}
\title[Some approximation results in time and space dependent Musielak spaces]{Some approximation results in time and space dependent Musielak spaces}
\author[]{Youssef Ahmida}
\address{Youssef Ahmida\\
Choua\"ib Doukkali University, Higher School of Education and Training of El Jadida,
Sciences and Technologies Team (ESTE), Road Azzemour, El Jadida, Morocco}
\email{youssef.ahmida@gmail.com}
\author[]{Ahmed Youssfi}
\address{Ahmed Youssfi\\
Sidi Mohamed Ben Abdellah University, National School of Applied Sciences,
Laboratory of Mathematical Analysis and Applications,
My Abdellah Avenue, Road Imouzer, P.O. Box 72 Fès-Principale, Fez, 30 000, Morocco}
\email{address: ahmed.youssfi@usmba.ac.ma ; ahmed.youssfi@gmail.com}
\begin{document}

\maketitle
\begin{abstract}
We provide density results for smooth functions in non-reflexive Musielak spaces defined up on time- and space- dependent modular functions. These Musielak spaces encompass a broad class of functional framework such as Bochner spaces, inhomogeneous Orlicz spaces as well as variable-exponent Sobolev spaces.
\end{abstract}
{\small {\bf Key words and phrases:}  Musielak spaces, Time and space dependent Musielak functions, Density}

{\small{\bf Mathematics Subject Classification (2020)}:  46E35, 11B05}

\tableofcontents
\section{Introduction}
It is in our purpose to provide density results for a relevant  functional framework with time and space dependent inhomogeneity, not necessarily reflexive nor separable, that can be involved in the theory of existence of variational evolution problems governed by operators having fast or slowly growing coefficients.

\par It is well known that the framework of Bochner spaces is not proper to study nonhomogeneous structure including the $p(\cdot)$-Laplacian or nonstandard Orlicz spaces since the analogous expression of the norm in classical Bochner spaces defined by 
$$
	\|u\|_{L^p(0,T;W^{1,p}_0(\Omega))}:=\left(\int_0^T \|\nabla u\|^p_{L^p(\Omega)}dt\right)^{\frac{1}{p}}
$$
 has no meaning in general (see  (\ref{luxem_norm}) below). Furthermore, in view of  \cite[Proposition 1.3 p. 218 and Remarks p. 219]{Donaldson74}, the analogous version of the identity 
$$
L^p\left(I,L^p(\Omega\right))=L^p(\Omega_T)
$$
in  Orlicz spaces holds only under very strong conditions which impose on the corresponding Orlicz functions of being equivalent to certain power $p$ with $1\leqslant p\leqslant\infty$. Consequently, the resulting Orlicz spaces become  reflexive and separable.
\par In the scale of Orlicz spaces, this problem is freed by Donaldson \cite{Donaldson74} who have 
introduced the idea of inhomogeneous Orlicz-Sobolev spaces $W^{1,x}L_\varphi(\Omega_T)$, defined as the class of all measurable functions $u : \Omega_T\to\mathbb{R}$ such that $u\in L_\varphi(\Omega_T)$ and $\nabla_x u\in L_\varphi(\Omega_T)$ where $\nabla_x$ stands for the distributional gradient with respect to the spatial variable. 
This idea was then used in \cite{EM05} to study variational initial-boundary value problems in the Orlicz spaces setting.
\par In the setting of variable exponent Lebesgue spaces, unsteady problems with time -and space- dependent nonlinearity were considered first in \cite{AS07,AS09}. The authors solved these problems in a functional framework quite different from Bochner spaces. Then, using the theory of monotone operators, the authors of \cite{DNR12,nagele_thesis} studied some parabolic equations governed by an operator having a $p(t,x)$-structure. Under a globally $\log$-H\"older continuity on the variable exponent, they developed basic properties and solved the problem in the energy space 
$$
	X(\Omega_T):=\left\{u\in L^2(\Omega_T)^N\big| \nabla u\in L^{p(\cdot,\cdot)}(\Omega_T)^{N\times N}, 
	u(\tau,\cdot)\in V_\tau(\Omega) \right\}
$$
where
$$
V_\tau(\Omega):=\left\{ v\in L^2(\Omega)^N \cap W_0^{1,1}(\Omega)^N, \nabla v\in L^{p(\tau,\cdot)}(\Omega)^{N\times N} \right\}
$$
We point out that the structure of $X(\Omega_T)$ reduces to Bochner spaces when the nonlinearity $p(\cdot,\cdot)$ is a constant function as one can see by using \cite[Lemma 4.2]{DNR12}. Note that the presence of $L^2$ in the energy space is dictated by problem study considerations. 
\par Coming up to the scale of Musielak spaces, we provide new functional spaces constructed from Musielak functions depending on time and space, in which Poincaré's inequality is not necessarily satisfied. The main challenge is to prove the density of smooth functions in these spaces.  Recall that smooth functions are not dense in general in Musielak spaces, in particular in variable exponent Sobolev spaces. Thus a condition of regularity must be imposed on the variable exponent.
It is known that a sufficient condition to get the density in variable exponent Sobolev spaces is the so called $\log$-H\"older condition (cf. \cite{bookCF,DHHR}).\\
We obtain approximation results for functional spaces built from   Musielak functions satisfying a regular condition (see \cite{AYGS2017,AFY_ms19,AY2018.art3}).
\par The manuscript is organized as follows. In section  \ref{background_par} we provide some definitions and notations of Musielak spaces. In Section \ref{Sec_ts_space} we introduce the time and space dependent Musielak spaces. Section \ref{Sec_app space} is devoted to approximation results of smooth functions in these spaces. In section \ref{Sec_app time}, we provide an energy parabolic space in which we prove that smooth functions are dense.
\section{Basic properties of Musielak spaces}\label{background_par}
In this section, we summarize some basic facts on Musielak spaces.  More details can be found, for instance, in  \cite{ahm_thesis,AYGS2017,AY2018.art3, Kaminska15,book_musielak,AY2018}. 
\begin{definition}
	A function $M$ : $\Omega\times\mathbb{R}^{+}\to \mathbb{R}^{+}$ is called $\Phi$-function, written $M\in\Phi$, if $M(\cdot,s)$ is a measurable function for every $s\geq 0$ and $M(x,\cdot)$ is a convex function for almost every $x\in\Omega$ with $M(x,0)=0$, $M(x,s)>0$ for $s>0$, $M(x,s)\rightarrow\infty \mbox{ as }s\rightarrow\infty$ and
	$$
	\lim_{s\to 0}\frac{M(x,s)}{s}=0\quad \mbox{ and }\quad \lim_{s\to \infty} \frac{M(x,s)}{s}=\infty.
	$$
\end{definition}
Let $\Psi\in \Phi(\Omega)$.
We denote by $\Psi^{\ast}: \Omega\times\mathbb{R}^{+}\to\mathbb{R}^{+}$ the complementary function to $\Psi$ in the sense of Young defined by
$$
\Psi^{\ast}\left(x,\xi_1\right)=\sup_{\xi_2\geq 0}\left\{\xi_1\xi_2-\Psi(x,\xi_2)\right\} \mbox{ for all } \xi_1\geq 0\mbox{ and a.e } x\in \Omega.
$$
Thus, we obtain the following Young inequality
\begin{equation}\label{younginequality}
	\xi_1\xi_2\leq \Psi(x,\xi_1)+ \Psi^\ast(x,\xi_2),\quad \forall \xi_1,\xi_2\geq 0 \mbox{ and a.e } x\in\Omega.
\end{equation}
\begin{definition}
	We say that a $\Phi(\Omega)$-function $\Psi$ is locally integrable, if for any constant number $c>0$ and for any compact subset $\Omega^\prime$ of  $\Omega$ we have
	\begin{equation}\label{incBM}
		\int_{\Omega^\prime} \Psi(x,c)dx<\infty.\tag{$\mathcal{L}^1_{loc}$}
	\end{equation}
\end{definition}
\begin{remark}
	Note here that (\ref{incBM}) holds if for instance $\Psi$ satisfies (\ref{X1}) (cf. \cite{AFY_ms19}, \cite[Remark 3.1]{AY_var19}).
\end{remark}
We define the Musielak space, denoted $L_\Psi(\Omega)$ (resp. $E_\Psi(\Omega)$), as the set of all measurable functions $u:\Omega\rightarrow \mathbb{R}$, such that 
$
\rho_\Psi(u):=\int_\Omega \Psi(x,|u(x)|/\lambda)dx<+\infty \mbox{ for some }\lambda>0, \; (\mbox{resp. for all } \lambda>0).
$
The space $L_\Psi(\Omega)$ (resp. $E_\Psi(\Omega)$) is endowed with the so-called Luxemburg norm
\begin{equation}\label{luxem_norm}
	\|u\|_{L_\Psi(\Omega)}=\inf\bigg\{\lambda>0: \int_\Omega \Psi(x,|u(x)|/\lambda)dx\leq 1\bigg\}
\end{equation}
which makes it a Banach space. In addition, by (\ref{younginequality}) we get the following H\"{o}lder inequality
$$
\int_\Omega |\xi_1\cdot\xi_2| dx \leq2\|\xi_1\|_{L_\Psi(\Omega)}\|\xi_2\|_{L_{\Psi^{\ast}}(\Omega)}, 
$$
for all $(\xi_1,\xi_2)\in L_\Psi(\Omega)\times L_{\Psi^{\ast}}\Omega)$.
\begin{definition}[Modular convergence]
	Let $\Psi\in\Phi(\Omega)$. We say that a sequence $\{v_n\}_n$ converges to $v$ in the modular sense in $L_\Psi(\Omega)$, if there is a constant number $\delta>0$ such that $\int_\Omega \Psi\left(x,\frac{|v_n-v|}{\delta}\right)dx\to 0 \mbox{ as } n\to\infty$.
\end{definition}
In what follows we us the following notations.
\begin{itemize}
	\item $\mathcal{C}^\infty_0(\Omega_T)$ the set of infinitely differentiable functions compactly supported in $\Omega_T$.
	\item $\mathcal{C}^\infty_0(\overline{\Omega}_T)$  the set of the restriction to $\Omega_T$ of the functions belonging to $\mathcal{C}^\infty_0(\mathbb{R}^{N+1})$.
	\item $\mathcal{C}^\infty_0([0,T];\mathcal{C}^\infty_0(\Omega ))$ the set of the restriction to $(0,T)$ of the functions belonging to the set $\mathcal{C}^\infty_0(\mathbb{R};\mathcal{C}^\infty_0(\Omega))$.
\end{itemize}
The following result ensures that the functions in $L_\Psi(\Omega)$ have distributional derivatives.
\begin{lemma}\label{lem_emb_L1}
	Let $\Omega$ be an open subset of $\mathbb{R}^N$ and  let $(\Psi,\Psi^\ast)$ be a pair of complementary $\Phi(\Omega)$-functions.  Then the following continuous embedding	
	$
	L_{\Psi}(\Omega) \hookrightarrow  L^1_{loc}(\Omega)
	$
	holds if one of the following assumptions holds
	\begin{itemize}
		\item [$1)$] $\Psi^\ast$ satisfies (\ref{incBM}) 
		\item [$2)$]  $\Psi$ satisfies
		\begin{equation}\label{cond_essinf}
			\underset{{x\in\Omega}}{\mathrm{ess\,inf\,}}\Psi(x,1)\geq c>0.
		\end{equation}
	\end{itemize} 
\end{lemma}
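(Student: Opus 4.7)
The plan is to treat the two sufficient conditions separately, in both cases reducing the claim to a bound of the form $\int_K |u|\,dx \lesssim \|u\|_{L_\Psi(\Omega)}$ for every compact $K \subset \Omega$.

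Under assumption $1)$, I would route the argument through the H\"older inequality stated after (\ref{luxem_norm}), so the task reduces to checking that $\chi_K \in L_{\Psi^*}(\Omega)$ for every compact $K$. For this I would use the convexity of $\Psi^*(x,\cdot)$ together with $\Psi^*(x,0)=0$ to obtain the scaling estimate $\Psi^*(x,1/\lambda) \leq \lambda^{-1}\Psi^*(x,1)$ valid for $\lambda \geq 1$, and then invoke (\ref{incBM}) applied to $\Psi^*$ with the constant $c=1$ to conclude $\int_K \Psi^*(x,1/\lambda)\,dx \leq 1$ for $\lambda$ large enough. H\"older then delivers
$$
\int_K |u|\,dx \leq 2\|u\|_{L_\Psi(\Omega)}\|\chi_K\|_{L_{\Psi^*}(\Omega)},
$$
which is the desired continuous embedding.

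Under assumption $2)$, the H\"older route is unavailable in general, so I would argue directly. The key ingredient is a linear lower bound: convexity of $\Psi(x,\cdot)$ with $\Psi(x,0)=0$ and the essential-infimum hypothesis (\ref{cond_essinf}) give $\Psi(x,s) \geq cs$ for almost every $x \in \Omega$ and every $s\geq 1$, since $\Psi(x,1) \leq s^{-1}\Psi(x,s)$ by writing $1 = s^{-1}\cdot s + (1-s^{-1})\cdot 0$. Now given $u\in L_\Psi(\Omega)$ with $\lambda := \|u\|_{L_\Psi(\Omega)}>0$, a minimizing sequence for the Luxemburg norm combined with monotone convergence shows that $\int_\Omega \Psi(x,|u|/\lambda)\,dx \leq 1$. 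I would then split an arbitrary compact set $K$ into $K_1 = K \cap \{|u|\geq \lambda\}$ and $K_2 = K\setminus K_1$: the lower bound applied on $K_1$ yields $|u| \leq (\lambda/c)\Psi(x,|u|/\lambda)$ and hence $\int_{K_1}|u|\,dx \leq \lambda/c$, while on $K_2$ the pointwise inequality $|u|<\lambda$ gives $\int_{K_2}|u|\,dx \leq \lambda|K|$. Summing produces the estimate $\int_K |u|\,dx \leq (c^{-1}+|K|)\|u\|_{L_\Psi(\Omega)}$.

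I expect the main obstacles to be rather different in the two cases. Under $1)$, the delicate point is to verify membership $\chi_K \in L_{\Psi^*}(\Omega)$; (\ref{incBM}) only guarantees local integrability at every level $c>0$, and one needs the convex scaling in order to make the Luxemburg norm of $\chi_K$ at most $1$ rather than merely finite. Under $2)$, the subtlety is extracting a globally usable lower bound on $\Psi$ from the single-point hypothesis at $s=1$; this forces the two-regime split based on whether $|u|$ is below or above $\|u\|_{L_\Psi(\Omega)}$, since a single pointwise estimate cannot simultaneously control the small-values region.
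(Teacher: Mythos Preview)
Your proposal is correct and follows essentially the same approach as the paper: under $1)$ you use H\"older's inequality together with the convex scaling $\Psi^*(x,1/\lambda)\leq \lambda^{-1}\Psi^*(x,1)$ to bound $\|\chi_K\|_{L_{\Psi^*}}$ (the paper records this as $\|1\|_{L_{\Psi^*}(K)}\leq \int_K \Psi^*(x,1)\,dx+1$), and under $2)$ your split $K_1\cup K_2$ is exactly the paper's decomposition $\Omega'\cup(K\setminus\Omega')$, leading to the identical constant $c^{-1}+|K|$.
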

\begin{proof} 
\item 1) Assume first that $\Psi^\ast$ satisfies (\ref{incBM}). Let $u\in L_\Psi(\Omega)$ and let $K$ be a compact subset of $\Omega$. By H\"{o}lder's inequality we get
$$
\int_K |u(x)|dx
\leq 2\|u\|_{L_{\Psi}(\Omega)} \|1\|_{L_{\Psi^\ast(K)}} 
\leq 2\|u\|_{L_{\Psi}(\Omega)}\left(\int_{K}\Psi^\ast(x,1)dx+1\right)<\infty,
$$	
which shows that $u\in L^1_{loc}(\Omega)$.
\item 2) Assume now that (\ref{cond_essinf}) holds. Let $u\in L_\Psi(\Omega)$ and consider the set  
		$$
		\Omega^\prime=\left\{x\in K,\; \frac{|u(x)|}{\|u\|_{L_{\Psi}(\Omega)}}\geq1\right\},
		$$
		where $K$ is a compact set of $\Omega$. Using (\ref{cond_essinf}) we can write
		$
		s\leq \frac{1}{c}\Psi(x,s) \mbox{ for all } s\geq 1.
		$
		Hence, we get
$$
\begin{array}{lll}\displaystyle
\int_K\frac{|u(x)|}{\|u\|_{L_{\Psi}(\Omega)}}dx&=\displaystyle \int_{\Omega^\prime}\frac{|u(x)|}{\|u\|_{L_{\Psi}(\Omega)}}dx+\int_{K\setminus \Omega^\prime} \frac{|u(x)|}{\|u\|_{L_{\Psi}(\Omega)}}dx\\
&\leq\displaystyle\frac{1}{c} \int_{\Omega^\prime}\Psi\left(x,\frac{|u(x)|}{\|u\|_{L_{\Psi}(\Omega)}}\right)dx+|K|\leq\displaystyle\frac{1}{c}+|K|.
\end{array}
$$
Thus, we obtain
		$
		\|u\|_{L^1(K)}\leq \left(\frac{1}{c}+|K|\right)\|u\|_{L_{\Psi}(\Omega)}.
		$
\end{proof}
\par Let $\alpha:=(\alpha_1,\cdots,\alpha_N)$ be multi-indices with $|\alpha|=\alpha_1+\cdots+\alpha_N$. In the sequel, we denote by $D^\alpha$ the distributional derivative with respect to the space variable. If $\Psi^\ast$ satisfies (\ref{incBM})  (or $\Psi\in \Phi$ satisfies (\ref{cond_essinf})), Lemma \ref{lem_emb_L1} ensures that the functions in $L_\Psi(\Omega)$ have distributional derivatives. Therefore, we can define the Musielak-Sobolev space  denoted $W^{1}L_M(\Omega)$, (resp. $W^{1}E_M(\Omega)$) as the set of all measurable functions $u:\Omega\rightarrow \mathbb{R}$, such that for all $|\alpha|\leq1$, the functions $D^\alpha u$ belong to $L_\Psi(\Omega)$ (resp. $E_\Psi(\Omega)$) that is if
$
\rho_\Psi(D^\alpha u)=\int_\Omega \Psi(x,|D^\alpha u(x)|/\lambda)dx<+\infty \mbox{ for some }\lambda>0, 
$
(resp. for all  $\lambda>0$). When endowed  with the norm
$$
\|u\|_{W^{1}L_\Psi(\Omega)}=\inf\left\{\lambda>0: \sum_{|\alpha|\leq1}\int_\Omega \Psi(x,|D^\alpha u(x)|/\lambda)dx\leq 1\right\}
$$
$W^{1}L_\Psi(\Omega)$ is a Banach space provided that $\Psi$ satisfies (\ref{cond_essinf}) or $\Psi^\ast$ satisfies (\ref{incBM}).
In what follows, we identify the space $W^1L_\Psi(\Omega)$ to a subspace of the product $\Pi L_\Psi$ of $(1+N)$ copies of  $L_\Psi(\Omega)$. Note that if a $\Phi(\Omega)$-function $\Psi$ satisfies (\ref{incBM}) the set of $\mathcal{C}^{\infty}_{0}(\Omega)$-functions is contained in $W^{1}L_\Psi(\Omega)$ and so we can define the space $W^{1}_0E_\Psi(\Omega)$ as the norm closure of $\mathcal{C}^{\infty}_{0}(\Omega)$ in $W^{1}L_\Psi(\Omega)$. In addition, if the pair of complementary $\Phi(\Omega)$-functions $(\Psi,\Psi^\ast)$ that satisfy booth (\ref{incBM}) the  weak-$^\ast$ topology $\sigma(\Pi L_\Psi, \Pi E_{\Psi^\ast})$ is well defined (see \cite{AY2018}) and so we can also define the space $W^{1}_0L_\Psi(\Omega)$ as the closure of $\mathcal{C}^{\infty}_{0}(\Omega)$ with respect to the weak-$^\ast$ topology $\sigma(\Pi L_\Psi, \Pi E_{\Psi^\ast})$.

Recall that if  a $\Phi(\Omega)$-function$\Psi$ satisfies (\ref{incBM}) the dual  space of $W^{1}_{0}E_{\Psi}(\Omega)$, denoted by $W^{-1}L_{\Psi^\ast}(\Omega)$, consists of all distributions $u \in (C^\infty_{ 0}(\Omega) )^{\prime}$ which can be written (see \cite[Theorem A.1.]{AY_var19})
$$
u=\Sigma_{|\alpha| \leq 1}(-1)^{|\alpha|} D^\alpha u_\alpha;\quad \text { where }  u_\alpha \in L_{\Psi^\ast}(\Omega) \mbox{ for all } |\alpha| \leq 1.
$$
\begin{definition}[Balanced $\Phi(\Omega)$-functions]
	A $\Phi(\Omega)$-function $M$ is said to satisfy the $\log$-H\"older condition if there exists a function 
	$\varrho:\big[0, {1}/{2}\big]\times\mathbb{R}^+\to\mathbb{R}^+$ such that $\varrho(\cdot,s)$ and $\varrho(x,\cdot)$ are non-decreasing functions and for all $x,y\in\overline{\Omega}$ with $|x-y|\leq\frac{1}{2}$ and for any constant $c>0$
	\begin{equation}\label{X1}
		M(x,s)\leq\varrho(|x-y|,s)M(y,s)\qquad\mbox{ with } \qquad\limsup_{\varepsilon\rightarrow0^+}
		\varrho(\varepsilon, c\varepsilon^{-N})<\infty.\tag{$\mathcal{L}_{hol}$}	
	\end{equation}
\end{definition}
We underline here that the assumption (\ref{X1}) is obviously satisfied in the framework of Orlicz spaces. Furthermore, (\ref{X1}) is a natural extension of the $\log$-H\"older condition used in the setting of variable exponent Sobolev spaces (see \cite[Example]{AFY_ms19}). In connection with the Lavrentiev phenomenon, (\ref{X1}) is a sufficient condition to avoid such a phenomena in the case of the double phase function (cf. \cite{AYGS2017,min-double-reg1}). The approximation results obtained in \cite{AYGS2017} allow to identify the weak derivatives of functions in Musielak-Sobolev spaces via smooth functions with respect to the so-called modular topology which is well suitable for the theory of existence of PDEs in nonreflexive spaces, as it is now well known in the setting of Orlicz spaces (see \cite{EM05,Gossez82,Gossez1987}), rather than the norm topology. We also recall that a $\Phi(\Omega)$-function $M$ that satisfies (\ref{X1}) is necessarily locally integrable (\ref{incBM}).
\section{Time and space dependent Musielak spaces}\label{Sec_ts_space}
We provide an appropriate functional framework of time and space dependent Musielak spaces which substitute the classical Bochner structure and includes the whole of the above inhomogeneous spaces (see Example \ref{exp_inh} below). 
\begin{definition}
	Let $M\in\Phi(\Omega_T)$. Suppose that the complementary $\Phi$-function $M^\ast$ of $M$ satisfies (\ref{incBM}) on $\Omega$, we define
	$$
	\begin{array}{c}
		W^{1,x}L_{M}(\Omega_T):=\Big\{u:\Omega_T\to \mathbb{R}: u\in L_M(\Omega_T), \nabla u\in L_M(\Omega_T)^N\Big\}\\
	\mbox{ and }\\
		W^{1,x}E_{M}(\Omega_T):=\Big\{u:\Omega_T\to \mathbb{R}: u\in E_M(\Omega_T), \nabla u\in E_M(\Omega_T)^N\Big\}.
	\end{array}
	$$
	We denote $\nabla u$ the vector gradient with respect to the space variable. These spaces  are normed by
	$
	\|u\|_{W^{1,x}L_{M}(\Omega_T)}:=\|u\|_{L_M(\Omega_T)}+
	\|\nabla u\|_{L_M(\Omega_T)^N}.
	$
\end{definition}
Observe that the space $W^{1,x}L_{M}(\Omega_T)$ can be identified to a subspace of the product  $\Pi L_M$ of $(N+1)$ copies of $L_M(\Omega_T)$.
\begin{remark}
	Let $M\in\Phi(\Omega_T)$ and suppose that the complementary $\Phi$-function $M^\ast$ of $M$ satisfies (\ref{incBM}) on $\Omega$, then by Fubini's theorem one can easily check that
	$$
	\begin{array}{c}
			W^{1,x}L_{M}(\Omega_T)=\Big\{v(\cdot,x):(0,T)\to  W^1L_{M}(\Omega): \;v\in L_M(\Omega_T), \nabla v\in L_M(\Omega_T)^N\Big\}\\
	\mbox{ and }\\
		W^{1,x}E_{M}(\Omega_T)=\Big\{v(\cdot,x):(0,T)\to  W^1E_{M}(\Omega): \;v\in E_M(\Omega_T), \nabla v\in E_M(\Omega_T)^N\Big\}.
	\end{array}
	$$
\end{remark}
\begin{definition}
	Let $(M,M^\ast)$ be a pair of complementary $\Phi(\Omega_T)$-functions satisfy booth (\ref{incBM}) on $\Omega$. We define
	$$
	\begin{array}{cc}
			W^{1,x}_{0}L_{M}(\Omega_T):=\Big\{u:(0,T)\to W^1_0L_{M}(\Omega): u\in L_M(\Omega_T), \nabla u\in L_M(\Omega_T)^N\Big\}\\
		\mbox{ and }\\
		W^{1,x}_{0}E_{M}(\Omega_T):=\Big\{u:(0,T)\to W^1_0E_{M}(\Omega): u\in E_M(\Omega_T), \nabla u\in E_M(\Omega_T)^N\Big\}.
	\end{array}
	$$
	These spaces will be equipped with the norm $\|\cdot\|_{	W^{1,x}L_{M}(\Omega_T)}$.
\end{definition}
In the following lemma, we give a completeness result. The proof is somehow almost similar to that of classical Sobolev spaces (e.g. \cite{AF}), so we have omitted it.
\begin{lemma}
	Let $M\in\Phi(\Omega_T)$ and assume that $M^\ast$ satisfies (\ref{incBM})  (or $M$ satisfies (\ref{cond_essinf})) on $\Omega_T$. Equipped with the norm $\|\cdot\|_{W^{1,x}L_{M}(\Omega_T)}$,  $W^{1,x}L_{M}(\Omega_T)$ is a Banach space.
\end{lemma}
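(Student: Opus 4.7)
The plan is to follow the classical Sobolev completeness argument, adapted to the Musielak setting by leveraging the local integrability embedding of Lemma \ref{lem_emb_L1}. That the expression $\|\cdot\|_{W^{1,x}L_M(\Omega_T)}$ defines a norm is immediate from the fact that $\|\cdot\|_{L_M(\Omega_T)}$ is a norm and from the identification of $W^{1,x}L_M(\Omega_T)$ with a subspace of $\prod L_M(\Omega_T)$ (the product of $N+1$ copies). So the content of the lemma is completeness.

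I would take a Cauchy sequence $\{u_n\}_n$ in $W^{1,x}L_M(\Omega_T)$. By the definition of the norm, the sequences $\{u_n\}_n$ and $\{\partial_{x_i} u_n\}_n$ for $i=1,\dots,N$ are each Cauchy in $L_M(\Omega_T)$. Since $(L_M(\Omega_T),\|\cdot\|_{L_M(\Omega_T)})$ is a Banach space, there exist functions $u$ and $v_1,\dots,v_N$ in $L_M(\Omega_T)$ such that
\begin{equation*}
\|u_n-u\|_{L_M(\Omega_T)}\to 0 \quad\text{and}\quad \|\partial_{x_i}u_n-v_i\|_{L_M(\Omega_T)}\to 0\ \text{for each } i=1,\dots,N.
\end{equation*}
The key point is then to identify each $v_i$ with the distributional spatial derivative $\partial_{x_i} u$. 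Under either of the assumed hypotheses ($M^\ast$ satisfies $(\mathcal{L}^1_{loc})$, or $M$ satisfies (\ref{cond_essinf})), Lemma \ref{lem_emb_L1} applied with the ambient open set $\Omega_T\subset\mathbb{R}^{N+1}$ yields the continuous embedding $L_M(\Omega_T)\hookrightarrow L^1_{\mathrm{loc}}(\Omega_T)$, so that $u_n\to u$ and $\partial_{x_i}u_n\to v_i$ in $L^1_{\mathrm{loc}}(\Omega_T)$.

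With $L^1_{\mathrm{loc}}$ convergence in hand, I would test against an arbitrary $\varphi\in\mathcal{C}^\infty_0(\Omega_T)$: integration by parts in the space variable gives, for each $n$,
\begin{equation*}
\int_{\Omega_T}\partial_{x_i}u_n\,\varphi\,dx\,dt=-\int_{\Omega_T}u_n\,\partial_{x_i}\varphi\,dx\,dt,
\end{equation*}
and since both sides converge (the integrals over $\mathrm{supp}\,\varphi$ are controlled by the $L^1_{\mathrm{loc}}$ convergence), we obtain $\int_{\Omega_T}v_i\varphi=-\int_{\Omega_T}u\,\partial_{x_i}\varphi$, i.e.\ $v_i=\partial_{x_i}u$ in $\mathcal{D}'(\Omega_T)$. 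Hence $u\in W^{1,x}L_M(\Omega_T)$, and $\|u_n-u\|_{W^{1,x}L_M(\Omega_T)}\to 0$.

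The only subtlety, which I expect to be the chief obstacle worth checking carefully, is that Lemma \ref{lem_emb_L1} is stated for an open subset of $\mathbb{R}^N$ whereas here we need to apply it to the cylinder $\Omega_T\subset\mathbb{R}^{N+1}$ viewed as an abstract open set and to $M$ viewed as a $\Phi(\Omega_T)$-function; one must verify that the hypothesis ($M^\ast$ satisfies $(\mathcal{L}^1_{loc})$ on $\Omega$, respectively $M$ satisfies (\ref{cond_essinf}) on $\Omega_T$) does imply the corresponding hypothesis on $\Omega_T$ (resp.\ directly on $\Omega_T$). For the first case, $\int_{K}M^\ast(t,x,c)\,dx\,dt$ for a compact $K\subset\Omega_T$ is bounded by the $\mathcal{L}^1_{loc}$ condition integrated in time, which is finite since $K$ is compact and $(0,T)$ bounded. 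With this verification, the argument above goes through exactly as in the classical Sobolev case, which is why the proof can reasonably be omitted in the text.
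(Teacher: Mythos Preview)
Your argument is correct and follows exactly the classical Sobolev completeness proof that the paper alludes to (and omits): completeness of $L_M(\Omega_T)$ gives componentwise limits, and the $L^1_{\mathrm{loc}}$ embedding from Lemma~\ref{lem_emb_L1} lets you identify the limit of $\partial_{x_i}u_n$ with $\partial_{x_i}u$ distributionally. The only remark is that your ``subtlety'' paragraph slightly overcomplicates things: the lemma's hypothesis is already stated \emph{on $\Omega_T$}, and the proof of Lemma~\ref{lem_emb_L1} is dimension-agnostic, so it applies verbatim with $\Omega_T\subset\mathbb{R}^{N+1}$ in place of $\Omega\subset\mathbb{R}^N$ with nothing further to check.
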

\begin{example}\label{exp_inh}
	We give here some examples of time and space dependent Musielak-type spaces that are admissible in our investigation. 
	\item 1) If $M(t,x,s)=|s|^p$, with $1<p<\infty$ then 
	\begin{itemize}
		\item the space $W^{1,x}L_{M}(\Omega_T)$ is nothing but  the classical Bochner space $L^p(0,T;W^{1,p}(\Omega))$ (e.g \cite{AF,Lions69}) defined as 
		$$
		L^p(0,T;W^{1,p}(\Omega)):=
		\left\{u:(0,T)\to W^{1,p}(\Omega): \int_{0}^T\|u\|^p_{W^{1,p}(\Omega)}dt<\infty\right\}.
		$$
		\item since $M(t,x,s)=|s|^p$ and it's complementary satisfy booth the $\Delta_2$-condition, by Mazur's lemma the norm and the weak closure of $C^\infty_0(\Omega)$-functions in $W^{1,p}(\Omega)$ coincide. We obtain the reflexive Sobolev-Bochner space
		$$
		\begin{array}{lll}
				W^{1,x}_{0}E_{M}(\Omega_T)=	W^{1,x}_{0}L_{M}(\Omega_T)=L^p(0,T;W^{1,p}_0(\Omega)).
		\end{array}
		$$
	\end{itemize}
	\item 2) If $M(t,x,s)=\varphi(s)$ ($t$ and $x$-independent), then 
	\begin{itemize}
		\item we obtain the inhomogeneous Orlicz-Sobolev spaces (cf. \cite{Donaldson74})
		$$
		W^{1,x}L_{M}(\Omega_T)=W^{1,x}L_{\varphi}(\Omega_T),
		\quad (=W^{1,x}E_{\varphi}(\Omega_T)\mbox{ if }\varphi\in\Delta_2).
		$$ 
		\item in view of Theorem \ref{lm_appr_spt2} (see hereafter) we get the space (see \cite{MR2101516})
		$$
		\begin{array}{llll}
				W^{1,x}_{0}L_{M}(\Omega_T)=W^{1,x}_{0}L_{\varphi}(\Omega_T),
		\end{array}
		$$
		that is $W^{1,x}_{0}L_{M}(\Omega_T)$ is the closure of $\mathcal{C}^\infty_0(\Omega_T)$-functions with respect to the weak-$^\ast$ topology in $W^{1,x}L_\varphi(\Omega_T)$.
	\end{itemize}
	\item 3) If $M(t,x,s)=|s|^{p(x)}$, $1<p^-:= \underset{x\in\Omega}{\mathrm{ess\,inf}}
	\leq p(x)\leq p^+:=\underset{x\in\Omega}{\mathrm{ess\,sup}}p(x)<+\infty$, then we obtain the inhomogeneous variable exponent Sobolev space (see \cite{ALY2016})
	$$
	W^{1,x}L_{M}(\Omega_T)=W^{1,x}L^{p(x)}(\Omega_T).
	$$
\end{example}
\section{Approximations in space}\label{Sec_app space}
\begin{theorem}\label{th_appr_spt}
	Let $\Omega$ be an open subset of $\mathbb{R}^N$, $N\geq1$, satisfying the segment property and $(M,M^\ast)\in\Phi(\Omega_T)$. Assume that $M$ satisfies (\ref{X1}) on $\Omega_T$ and $M^\ast$ satisfies (\ref{incBM}) on $\Omega_T$. Let $u\in W^{1,x}L_{M}(\Omega_T)$  (resp.  $u\in 	W^{1,x}E_{M}(\Omega_T)$). Then there exists a sequence of functions $u_k\in \mathcal{C}^\infty_0(\overline{\Omega}_T)$ such that $D^\alpha  u_k\xrightarrow[k\to\infty]{} D^{\alpha} u$, $|\alpha|\leq1$, modularly (resp. in norm) in  $L_M(\Omega_T)$.
\end{theorem}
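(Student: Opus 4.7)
The plan is to follow the Gossez-type strategy of partition of unity, translation, and mollification, suitably adapted to the Musielak setting by exploiting the log-H\"older condition (\ref{X1}). Since $\Omega_T=(0,T)\times\Omega$ and $(0,T)$ trivially has the segment property, the product $\Omega_T$ inherits the segment property from $\Omega$. I will construct approximants that first have compact support in $\mathbb{R}^{N+1}$ (using translations pushed inward by the segment property) and then smooth them by a standard mollifier in both variables simultaneously.

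First I would fix a finite open cover $\{U_j\}_{j=0}^m$ of $\overline{\Omega}_T$ with $U_0\Subset\Omega_T$, together with the vectors $y_j\in\mathbb{R}^{N+1}$ provided by the segment property so that $(U_j\cap\overline{\Omega}_T)+\tau y_j\subset\Omega_T$ for small $\tau>0$, and a subordinate partition of unity $\{\vp_j\}$. Writing $u=\sum_{j=0}^m \vp_j u$, the interior piece $\vp_0 u$ is easy to mollify. For each boundary piece $\vp_j u$, I would translate, setting $v_{j,\tau}(z):=(\vp_j u)(z-\tau y_j)$, so that $\supp v_{j,\tau}$ is at positive distance from $\partial\Omega_T$. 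A routine continuity-of-translation argument (using that $u\in L_M\subset L^1_{\mathrm{loc}}$ by Lemma \ref{lem_emb_L1}, and then density of $E_M$ in $L_M$ in the modular sense) shows that $v_{j,\tau}\to \vp_j u$ modularly in $L_M(\Omega_T)$, together with its spatial derivatives, as $\tau\to 0^+$.

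The main obstacle is the mollification step: without (\ref{X1}) the convolution $\rho_\varepsilon\ast v_{j,\tau}$ need not converge to $v_{j,\tau}$ even in norm, as is well known already for variable exponents. The key estimate, pointwise on $\supp v_{j,\tau}$ for $\varepsilon$ small enough that $|z-w|\le\varepsilon$ implies the log-H\"older control applies, is obtained via Jensen's inequality in the form
\begin{equation*}
M\!\left(z,\frac{|(\rho_\varepsilon\ast v_{j,\tau})(z)|}{\delta}\right)
\le \int \rho_\varepsilon(z-w)\,M\!\left(z,\frac{|v_{j,\tau}(w)|}{\delta}\right)dw
\le \int \rho_\varepsilon(z-w)\,\varrho(\varepsilon,|v_{j,\tau}(w)|/\delta)\,M\!\left(w,\frac{|v_{j,\tau}(w)|}{\delta}\right)dw.
\end{equation*}
Splitting the domain into the region $\{|v_{j,\tau}|/\delta \le c\varepsilon^{-N}\}$ (on which the second factor $\varrho(\varepsilon,\cdot)$ is bounded uniformly in $\varepsilon$ by (\ref{X1}), so the contribution is controlled by $M(w,|v_{j,\tau}|/\delta)$, which is integrable and yields a dominated-convergence argument) and its complement (which has measure tending to zero, so one absorbs it by choosing $\delta$ large and using the modular absolute continuity of $\int M(w,|v_{j,\tau}|/\delta)dw$), one obtains $\int_{\Omega_T} M(z,|\rho_\varepsilon\ast v_{j,\tau}-v_{j,\tau}|/\delta)\,dz\to 0$ as $\varepsilon\to 0$, for some $\delta>0$ large enough. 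The same computation, applied to $\nabla(\rho_\varepsilon\ast v_{j,\tau})=\rho_\varepsilon\ast\nabla v_{j,\tau}$, handles the gradient.

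Finally I would conclude by a diagonal argument: choosing $\tau=\tau_k\to 0$ and $\varepsilon=\varepsilon_k\to 0$ fast enough with respect to $\tau_k$, the functions $u_k:=\sum_{j=0}^m \rho_{\varepsilon_k}\ast v_{j,\tau_k}$ lie in $\mathcal{C}^\infty_0(\overline{\Omega}_T)$ and satisfy $D^\alpha u_k\to D^\alpha u$ modularly in $L_M(\Omega_T)$ for $|\alpha|\le 1$. For the $E_M$ statement, if instead $u\in W^{1,x}E_M(\Omega_T)$ then the modulars $\int M(\cdot,|v_{j,\tau}|/\delta)$ are finite for every $\delta>0$, so the $\delta$-dependence disappears and the same scheme yields norm convergence, since modular convergence with arbitrary $\delta$ is equivalent to norm convergence in $E_M$.
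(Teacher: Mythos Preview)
Your overall strategy (partition of unity, translation via the segment property, mollification controlled by (\ref{X1})) is the same as the paper's, and your sketch of the Jensen/log-H\"older estimate for the mollification step is essentially the content of the cited \cite[Lemma~12]{AYGS2017}. However, there is a genuine gap in the translation step: you translate in the wrong direction for this theorem.

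You set $v_{j,\tau}(z)=(\vp_j u)(z-\tau y_j)$ with $y_j$ the inward segment vector, so that $\supp v_{j,\tau}=\supp(\vp_j u)+\tau y_j\subset\Omega_T$. This is the move appropriate for approximating elements of $W^{1,x}_{0}L_M(\Omega_T)$ by $\mathcal{C}^\infty_0(\Omega_T)$ (and indeed is exactly what the paper does in Corollary~\ref{cor_appr_spt}, where it replaces $r_i$ by $-r_i$). For a general $u\in W^{1,x}L_M(\Omega_T)$ with no boundary condition, it fails: the zero extension of $\vp_j u$ to $\mathbb{R}^{N+1}$ is \emph{not} in $W^{1,1}_{\mathrm{loc}}(\mathbb{R}^{N+1})$ unless the trace of $\vp_j u$ on $\partial\Omega_T$ vanishes. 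After your inward shift, the jump set $(\partial\Omega_T\cap U_j)+\tau y_j$ sits strictly inside $\Omega_T$, so $v_{j,\tau}$ has a jump there and $\nabla v_{j,\tau}$ is not an $L_M$ function; your claimed convergence $\nabla v_{j,\tau}\to\nabla(\vp_j u)$ cannot hold. Concretely, for $u\equiv 1$ on a bounded $\Omega_T$ your approximants vanish near $\partial\Omega_T$ and do not converge even in $L^1$. The correct move for Theorem~\ref{th_appr_spt} is the opposite shift $v_{j,\tau}(z)=(\vp_j u)(z+\tau y_j)$: for $z\in U_j\cap\overline{\Omega}_T$ one has $z+\tau y_j\in\Omega_T$, so $v_{j,\tau}$ is defined on a full neighbourhood of $U_j\cap\overline{\Omega}_T$ using only values of $u$ inside $\Omega_T$; after mollification with $\varepsilon$ small relative to $\tau$ one obtains a function in $\mathcal{C}^\infty_0(\overline{\Omega}_T)$ (not $\mathcal{C}^\infty_0(\Omega_T)$), and now $\nabla v_{j,\tau}=(\nabla(\vp_j u))(\cdot+\tau y_j)$ genuinely, so continuity of translation applies.

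A secondary omission: since $\Omega$ is only assumed open (not bounded), $\overline{\Omega}_T$ need not be compact and your ``finite open cover of $\overline{\Omega}_T$'' is not available. The paper handles this by a preliminary Step~1 (multiplication by a smooth cutoff $\chi_n(z)=\chi(z/n)$, with a dominated-convergence argument) to reduce to $u$ with compact support in $\overline{\Omega}_T$; only then does the finite covering argument go through.
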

\begin{proof} 
	Setting $z:=(t,x)$, we shall show that for all $|\alpha|\leq1$ there is a constant number $\lambda>0$ such that for an arbitrary $\eta$ there exists a function $w\in\mathcal{C}^\infty_0(\overline{\Omega}_T)$ satisfying
	\begin{equation}\label{eq1_th_appr_W1X0}
		\int_{\Omega_T}M\left(z,\frac{|D^{\alpha} u(z)-D^{\alpha} w(z)|}{\lambda}\right)dz\leq \eta.
	\end{equation}
	We divide the proof into two steps.
	\par\noindent\textbf{Step 1.} In this step we will show that every function $u\in W^{1,x}L_{M}(\Omega_T)$ can be approximated modularly by a sequence of functions with compact supports in $\overline{\Omega}_T$. For that, let $\chi\in\mathcal{C}^\infty_0(\mathbb{R}^{N+1})$ be the function defined by $\chi(z)=1$  if  $|z|\leq 1$ and  $\chi(z)=0$ if $|z|\geq 2$. There exist $c\geq1$ such that 
	\begin{equation}\label{chi}
		|D^{\alpha} \chi(z)|\leq c \text{ and }|\partial_{t} \chi(z)|\leq c, \text{ for all } z\in\mathbb{R}^{N+1} \text{ and } |\alpha|\leq 1.
	\end{equation}
	For $n\geq1$ let $\chi_n(z)=\chi(z/n)$. Hence, $\chi_n(z)=1$ if  $|z|\leq n$ and $|D^\alpha\chi_n(z)|\leq \frac{c}{n^{|\alpha|}}\leq c$. Therefore, the function
	\begin{equation}\label{fnc:uR}
		u_n:=\chi_nu
	\end{equation}
	belongs to $W^{1,x}L_{M}(\Omega_T)$. Furthermore, since $u\in	W^{1,x}L_{M}(\Omega_T)$, there exists a real constant $\lambda_1>0$ such that for all $|\alpha|\leq 1$ one has $\int_{\Omega_T}M(z,|D^\alpha u(z)|/\lambda_1)dz<\infty$.	
	On the one hand one has 
	$$
	\begin{array}{lll}\displaystyle
		M\left(z,\frac{|D^\alpha u(z)-\chi_n(z)D^\alpha u(z)|}{2\lambda_1}\right) \leq M\left(z,\frac{|D^\alpha u(z)|}{\lambda_1}\right)\in L^1(\Omega_T).
	\end{array}
	$$
	On the other hand when $n\to\infty$ we have
	$$
	M\left(z,\frac{|D^\alpha u(z)-\chi_n(z)D^\alpha u(z)|}{2\lambda_1}\right)\rightarrow ,  \mbox{  a.e. in } \Omega_T.
	$$
	So that by the Lebesgue dominated convergence theorem we obtain
	\begin{equation}\label{con_seq_ur}
		\lim_{n\rightarrow\infty}\int_{\Omega_T}M\left(z,\frac{|D^\alpha u(z)-\chi_n(z)D^\alpha u(z)|}{2\lambda_1}\right)dz= 0.
	\end{equation}
	Set $\lambda_2=\max\{8c\lambda_1,4\lambda_1\}$. The convexity of $M(z,\cdot)$ allows us to get
	\begin{equation*}
		\begin{array}{lll}\displaystyle
			\sum_{|\alpha|\leq 1}\int_{\Omega_T} M\left(z,\frac{\big|D^\alpha u(z)-D^\alpha u_n(z)\big|}{\lambda_2}\right)dz\\
			\displaystyle	\leq\frac{1}{2}\int_{\Omega_T} M\left(z,\frac{|u(z)-u_{n}(z)|}{2\lambda_1}\right)dz+\frac{1}{2}\int_{\Omega_T} M\left(z,\frac{|D u(z)-D u_{n}(z)|}{4c\lambda_1}\right)dz\\
			\displaystyle	\leq\frac{1}{2}\int_{\Omega_T}M\left(z,\frac{|u(z)-\chi_n(z)u(z)|}{2\lambda_1}\right)dz+
			\frac{1}{4}\int_{\Omega_T} \left(z,\frac{|D u(z)-\chi_n(z)D u(z)|}{2\lambda_1}\right)dz\\
			\displaystyle	+\frac{1}{4}\int_{\Omega_T} M\left(z,\frac{1}{2c\lambda_1 n}|u(z)(D\chi)\left(\frac{z}{n}\right) |\right)dz,
		\end{array}
	\end{equation*}
	all the terms on the right-hand side of the above inequality tend to zero as $n\rightarrow \infty$. Indeed, the first and the second terms tend to zero by (\ref{con_seq_ur}), while for the third one can use again Lebesgue's dominated convergence theorem, since 
	$
	M\left(z,\frac{1}{2c\lambda_1 n}\left|(D\chi)\left(\frac{z}{n}\right) u(z)\right|\right)\xrightarrow[n\to+\infty]{} 0 \mbox{  a.e. in } \Omega_T
	$
	and for $n\geq1$
	$
	\int_{\Omega_T} M\left(z,\frac{1}{2c\lambda_1 n}\left|(D\chi)(z/n) u(z)\right|\right)dz\leq
	\int_{\Omega_T} M\left(z,\frac{|u(z)|}{\lambda_1}\right)dz<\infty.
	$
	Therefore, for any $\eta>0$ and for $n$ large enough we obtain
	\begin{equation}\label{eq_cutt_off_par}
		\sum_{|\alpha|\leq 1}\int_{\Omega_T} M\left(z,\frac{\big|D^\alpha u(z)-D^\alpha u_n(z)\big|}{\lambda_2}\right) dz\leq \eta
	\end{equation}
	\par\textbf{Step 2.} In view of the first step we can assume without loss of generality that $u$ has a compact support $K$ in $\overline{\Omega}_T$. We can therefore distinguish the two cases :  $K\subset\subset \Omega_T$ and $K$ meets $\partial \Omega_T$. If $K\subset\subset \Omega_T$, then by a similar argument as \cite[Lemma 12]{AYGS2017}, there exists a function $w\in\mathcal{C}^\infty_0(\Omega_T) $ such that (\ref{eq1_th_appr_W1X0}) holds true.
	Suppose now that $K$ meets $\partial\Omega_T$. Let $\{\theta_i\}$ be the finite open covering of $\partial\Omega$ given by the segment property and $\{\vartheta_j\}$ the finite open covering of $\partial([0,T])$. Since $K\cap \partial \Omega_T$ is compact, there exist two finite collections $\{\theta_i\}_{i=1,\cdots,p}$ and $\{\vartheta_j\}_{1\leq j\leq2}$ such that $\cup_{j=1}^{2}\cup_{i=1}^{p}(\vartheta_j\times \theta_i)$ covers  $K\cap\partial \Omega_T$.
	Let
	$
	E=K\setminus \cup_{j=1}^{2}\cup_{i=1}^{p}(\vartheta_j\times \theta_i).
	$
	Since $E$ is a compact subset  of $\Omega_T$, there exist two open sets $\theta_0$ and $\vartheta_0$ with a compact closure in $\Omega$ and $(0,T)$ respectively such that $E\subset\overline{\vartheta}_{0}\times \overline{\theta}_{0}\subset \Omega_T$. Hence $\cup_{j=0}^{2}\cup_{i=0}^{p}(\vartheta_{j}\times \theta_{i})$ is an open covering of $K$.
	Moreover, we can construct another open covering  $(\vartheta^{\prime}_{j}\times \theta^{\prime}_{i})_{\substack{i=0,\cdots,p\\j=0,1,2}}$, of $K$ such that each $\theta^{\prime}_{i}$ and $\vartheta^{\prime}_{j}$ have compact closures in $\theta_i$ and $\vartheta_j$ respectively.
	Let $\{\psi_{j,i}\}$ be a partition of unity subordinate to $(\vartheta^{\prime}_{j}\times \theta^{\prime}_{i})_{\substack{i=0,\cdots,p\\ j=0,1,2}}$ with $\sum_{i=0}^p\sum_{j=0}^2\psi_{j,i}=1$ on $K$ and let $u_{j,i}=u \psi_{j,i}$. Hence
	$
	u=\sum_{i=0}^{p}\sum_{j=0}^2u_{j,i},\;
	\mbox{ supp } u_{j,i}\subset \vartheta^{\prime}_{j}\times \theta^{\prime}_{i} \mbox{ and } D^{\alpha}u_{j,i}\in L_{M}(\Omega_T),\; \forall |\alpha|\leq1.
	$
	Now by means of mollification properties we will construct the function $w$ in (\ref{eq1_th_appr_W1X0}). We distinguish the following four cases:
	\par\noindent$\bullet$ $\underline{\mbox{The case }i=j=0}$: We have $\mbox{supp}\, u_{\scriptscriptstyle0,0}\subset \vartheta^{\prime}_0\times \theta^{\prime}_0\subset \Omega_T$. So for $\varepsilon_{\scriptscriptstyle0,0}>0$ small enough such that
	$
	\varepsilon_{\scriptscriptstyle0,0}<\min\Big\{\mbox{dist}(\theta^{\prime}_0,\partial \Omega), \mbox{dist}(\vartheta^{\prime}_0,\partial (0,T))\Big\},
	$
	the regularized function defined by
	$u_{\scriptscriptstyle0,0}^{\varepsilon_{\scriptscriptstyle0,0}}=J_{\varepsilon_{\scriptscriptstyle0,0}}*u_{\scriptscriptstyle0,0}$ belongs to $\mathcal{C}^{\infty}_{0}(\Omega_T)$ and by \cite[Lemma 12]{AYGS2017}, there exist $\lambda_2>0$ and $\tilde{\varepsilon}_{\scriptscriptstyle0}$ such that for all $\varepsilon_{\scriptscriptstyle0,0}\leq\tilde{\varepsilon}_{\scriptscriptstyle0}$ we get
	\begin{equation}\label{con_seq_u0}
		\int_{\Omega_T}M\left(z,\frac{|D^\alpha u_{\scriptscriptstyle 0,0}^{\varepsilon_{\scriptscriptstyle 0,0}}(z)-D^\alpha u_{\scriptscriptstyle0,0}(z)|}{\lambda_3}\right)\leq \eta.
	\end{equation}
	Let $i,j$ be fixed. For the remaining cases, we extend $u_{j,i}$ to $\mathbb{R}^{N+1}$ to be identically zero outside $\vartheta^{\prime}_{j}\times \theta^{\prime}_{i}$. We note 
	$z_{j,i}=(z_{j},z_{i})$ the non-zero vector associated to $\vartheta_j\times \theta_i$ by the segment property and we take
	$(r_{j},r_{i})\in(0,1)^2$ be such that
	$
	0<r_{j}<\min\Big\{(|z_{j}|+1)^{-1}, \mbox{dist}(\vartheta^{\prime}_{j},\partial \vartheta_{j})|z_{j}|^{-1}\Big\}
	$
	and
	$
	0<r_{i}<\min\Big\{(|z_{i}|+1)^{-1}, \mbox{dist}(\theta^{\prime}_{i},\partial \theta_{i})|z_{i}|^{-1}\Big\}.
	$ 
	We set 
	$
	\Gamma^t_{j,r_{j}}=\overline{\vartheta}^{\prime}_{j}\cap\partial (0,T)-r_{j}z_{j}\quad\mbox{ and }\quad \Gamma^x_{i,r_{i}}=\overline{\theta}^{\prime}_{i}\cap\partial \Omega-r_{i}z_{i}.
	$
	\par\noindent$\bullet$ $\underline{\mbox{The case } 1\leq i\leq p\mbox{ and }j=1,2}$ : Let
	$$
	\varepsilon_{j,i}<\min\Big\{\mbox{ dist}\left(\Gamma^t_{j,r_{j}}, \vartheta_j\cap[0,T]\right), \mbox{dist}\left(\Gamma^x_{i,r_{i}}, \theta_i\cap\overline{\Omega}\right)\Big\}	
	$$
	and set $
	(u_{j,i})_{(r_{j},r_{i})}(t,x)=u_{j,i}\left(t+r_{j}z_{j},x+r_{i}z_{i}\right).$	
	Let $B(0,1)$ be the open ball unit in $\mathbb{R}^{N+1}$, for $i=1,\cdots,p$ and $j=1,2$ we define the sequence of mollification
	$$
	\begin{array}{lll}	\displaystyle
		\left(u_{j,i}^{\varepsilon_{j,i}}\right)_{(r_{j},r_{i})}(z)&=\displaystyle J_{\varepsilon_{j,i}}*(u_{j,i})_{(r_{j},r_{i})}(z)\\
		&=\displaystyle\int_{B(0,1)} J(\tau,y)u_{j,i}\left(t+r_{j}z_{j}-\varepsilon_{j,i}\tau,x+r_{i}z_{i}-\varepsilon_{j,i} y\right)d\tau dy.
	\end{array}	
	$$
	Therefore, by \cite[Theorme 2]{AYGS2017} the sequence $(u_{j,i}^{\varepsilon_{j,i}})_{(r_{j},r_{i})}$ belongs to $C^\infty_0(\overline{\Omega}_T)$ and there exist $\lambda_4>0$ and $\tilde{\varepsilon}_{0}$ and $\tilde{r}_{0}$ such that for all $\varepsilon_{j,i}\leq\tilde{\varepsilon}_{0}$ and all
	$r_{j},r_{i}\leq\tilde{r}_{0}$ we have
	\begin{equation}\label{con_seq_u1}
		\int_{\Omega_T}M\left(z,\frac{|D^\alpha \left(u_{j,i}^{\varepsilon_{j,i}}\right)_{(r_{j},r_{i})}(z)-D^\alpha u_{j,i}(z)|}{\lambda_4}\right)dz\leq \eta.
	\end{equation}
	\par\noindent$\bullet$ $\underline{\mbox{The case } i=0\mbox{ and }j=1,2}$ : Let
	$
	\varepsilon_{j,0}<\min\Big\{\mbox{ dist}\left(\Gamma^t_{j,r_{j}}, \vartheta_j\cap[0,T]\right),\mbox{ dist}(\theta^{\prime}_0,\partial \Omega)\Big\}.
	$
	By virtue of \cite[Theorme 2]{AYGS2017}, the sequence
	$$
	\begin{array}{llll}
		\left(u_{j,0}^{\varepsilon_{j,0}}\right)_{(r_{j},0)}=J_{\varepsilon_{j,0}}*(u_{j,0})_{(r_{j},0)}
		=\displaystyle\int_{B(0,1)} J(\tau,y)u_{j,0}\left(t+r_{j}z_{j}-\varepsilon_{j,0}\tau,x-\varepsilon_{j,0} y\right)d\tau dy
	\end{array}
	$$
	belongs to $C^\infty_0([0,T]\times\Omega)$ and there exist $\lambda_5>0$ and $\tilde{\varepsilon}_{0}$ and $\tilde{r}_{0}$ such that for all $\varepsilon_{j,0}\leq\tilde{\varepsilon}_{0}$ and for all
	$r_{j}\leq\tilde{r}_{0}$, we have
	\begin{equation}\label{con_seq_u2}
		\int_{\Omega_T}M\left(z,\frac{|D^\alpha \left(u_{j,0}^{\varepsilon_{j,0}}\right)_{(r_{j},0)}(z)-D^\alpha u_{j,0}(z)|}{\lambda_5}\right)dz\leq \eta.
	\end{equation}
	\par\noindent$\bullet$ $\underline{\mbox{The case }1\leq i\leq p \mbox{ and } j=0}$ : Let
	$
	\varepsilon_{0,i}<\min\Big\{ \mbox{dist}(\vartheta^{\prime}_0,\partial (0,T)),\mbox{dist}\left(\Gamma^x_{i,r_{i}}, \theta_i\cap\overline{\Omega}\right)\Big\}.
	$
	Again by \cite[Theorme 2]{AYGS2017}, the sequence
	$$
	\left(u_{0,i}^{\varepsilon_{0,i}}\right)_{(0,r_{i})}=J_{\varepsilon_{0,i}}*(u_{0,i})_{(0,r_{i})}=\int_{B(0,1)} J(\tau,y)u_{0,i}(t-\varepsilon_{0,i}\tau,x+r_{i}z_{i}-\varepsilon_{0,i} y)d\tau dy,
	$$
	belongs to $C^\infty_0((0,T)\times\overline{\Omega})$ and there exist $\lambda_6>0$ and $\tilde{\varepsilon}_{0}$ and $\tilde{r}_{0}$ such that for all $\varepsilon_{0,i}\leq\tilde{\varepsilon}_{0}$ and all
	$r_{i}\leq\tilde{r}_{0}$ we have
	\begin{equation}\label{con_seq_u3}
		\int_{\Omega_T}M\left(z,\frac{|D^\alpha \left(u_{0,i}^{\varepsilon_{0,i}}\right)_{(0,r_{i})}(z)-D^\alpha u_{0,i}(z)|}{\lambda_6}\right)dz\leq \eta.
	\end{equation}
	Let us now define the function
	\begin{equation}\label{seq_sp1}
		w=\sum_{i=0}^p\sum_{j=0}^2 (u_{j,i}^{\varepsilon_{j,i}})_{(r_{j},r_{i})},
	\end{equation}
	where we use the convention  $r_0=0$ and  set $\left(u_{0,0}^{\varepsilon_{0,0}}\right)_{(r_{0},r_{0})}:=u_{0,0}^{\varepsilon_{0,0}}$. Note that the function $w$ belongs to $\mathcal{C}^\infty_0(\overline{\Omega}_T)$. Let $\lambda_7=\max\{\lambda_k;\; k=3,\cdots,6\}$, the convexity of  $M(z,\cdot)$ yields
	$$
	\begin{array}{c}\displaystyle
		\int_{\Omega_T}M\left(z,\frac{|D^{\alpha} u- D^{\alpha}w|}{2^{(p+4)}\lambda_7}\right)dz
		\leq\sum_{i=0}^p\sum_{j=0}^2\int_{\Omega_T}M\left(z,\frac{|D^{\alpha} u_{j,i}- D^{\alpha}(u_{j,i}^{\varepsilon_{j,i}})_{(r_{j},r_{i})}|}{\lambda_7}\right)dz
	\end{array}
	$$
	and also
	$
	\|u-w\|_{L^2(\Omega_T)}\leq\sum_{i=0}^p\sum_{j=0}^2
	\big\|u_{j,i}-\left(u_{j,i}^{\varepsilon_{j,i}}\right)_{(r_{j},r_{i})}\big\|_{L^2\Omega_T}.
	$
	Therefore, combining (\ref{con_seq_u0}), (\ref{con_seq_u1}), (\ref{con_seq_u2}), (\ref{con_seq_u3}), we get
	\begin{equation}\label{con_seq_u4}
		\int_{\Omega_T}M\left(z,\frac{|D^{\alpha} u(z)-D^{\alpha} w(z)|}{2^{(p+4)}\lambda_7}\right)dz\leq\eta.
	\end{equation}
	Finally, for $\lambda=\max\{2\lambda_2,2^{(p+5)}\lambda_7\}$, by (\ref{eq_cutt_off_par}), (\ref{con_seq_u4}) and the convexity of $M$ we obtain (\ref{eq1_th_appr_W1X0}).
	\par Now if $u$ belongs to $W^{1,x}E_{M}(\Omega_T)$, by similar arguments as above with arbitrary $\lambda,\lambda_k$, $k=1,\cdots,7$, the above modular convergences become norm ones in $L_M(\Omega_T)$. Then we conclude thanks to \cite[Lemma 2.7]{AFY_ms19}.
\end{proof}
\begin{theorem}\label{lm_appr_spt2}
	Let $\Omega$ be an open subset of $\mathbb{R}^N$  and let $(M,M^\ast)$ be a pair of complementary $\Phi(\Omega_T)$-functions  satisfying booth the condition (\ref{incBM}) on $\Omega_T$. Then 
	$
		W^{1,x}_{0}L_{M}(\Omega_T)=\overline{\mathcal{C}^{\infty}_0(\Omega_T)}^{\sigma\left(\Pi L_M, \Pi E_{M^\ast}\right)}.
	$
	That is $W^{1,x}_{0}L_{M}(\Omega_T)$ is the closure of $\mathcal{C}^{\infty}_0(\Omega_T)$ in $W^{1,x}L_{M}(\Omega_T)$ with respect to the weak-$^\ast$ topology  $\sigma(\Pi L_M, \Pi E_{M^\ast})$.
\end{theorem}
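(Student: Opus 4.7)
The plan is to establish both inclusions via a Hahn--Banach (bipolar) argument, mirroring the definition of $W^{1}_{0}L_{M}(\Omega)$ as the $\sigma(\Pi L_M,\Pi E_{M^\ast})$-closure of $\mathcal{C}^{\infty}_{0}(\Omega)$. For the inclusion ``$\supseteq$'', I first observe that each $\varphi\in\mathcal{C}^{\infty}_{0}(\Omega_T)$ has time-slices $\varphi(t,\cdot)\in\mathcal{C}^{\infty}_{0}(\Omega)\subset W^{1}_{0}L_{M}(\Omega)$, so $\mathcal{C}^{\infty}_{0}(\Omega_T)\subset W^{1,x}_{0}L_{M}(\Omega_T)$, and it suffices to show that $W^{1,x}_{0}L_{M}(\Omega_T)$ is $\sigma(\Pi L_M,\Pi E_{M^\ast})$-closed in $W^{1,x}L_{M}(\Omega_T)$. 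For every $\phi\in\mathcal{C}^{\infty}_{0}(0,T)$ and every $(\psi_\alpha)_{|\alpha|\leq 1}$ in the annihilator $(W^{1}_{0}L_{M}(\Omega))^{\perp}\subset\Pi E_{M^\ast}(\Omega)$, the map
\[
  v\mapsto \int_0^T\phi(t)\sum_{|\alpha|\leq 1}\int_\Omega \psi_\alpha(x)\,D^\alpha v(t,x)\,dx\,dt
\]
is $\sigma(\Pi L_M,\Pi E_{M^\ast})$-continuous on $\Pi L_M(\Omega_T)$ and vanishes identically on $W^{1,x}_{0}L_{M}(\Omega_T)$. Intersecting the kernels of this family recovers $W^{1,x}_{0}L_{M}(\Omega_T)$, so the latter is weak-$\ast$ closed.

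For the main inclusion ``$\subseteq$'', fix $u\in W^{1,x}_{0}L_{M}(\Omega_T)$. By the bipolar theorem, it suffices to show that every $\Lambda=(g_\alpha)_{|\alpha|\leq 1}\in\Pi E_{M^\ast}(\Omega_T)$ with $\Lambda|_{\mathcal{C}^{\infty}_{0}(\Omega_T)}\equiv 0$ also satisfies $\Lambda(u)=0$. Evaluating such a $\Lambda$ on tensor products $\phi(t)\psi(x)$ with $\phi\in\mathcal{C}^{\infty}_{0}(0,T)$, $\psi\in\mathcal{C}^{\infty}_{0}(\Omega)$, and applying Fubini gives
\[
  G_\psi(t):=\sum_{|\alpha|\leq 1}\int_\Omega g_\alpha(t,x)\,D^\alpha\psi(x)\,dx=0\qquad\text{for a.e. } t.
\]
Fubini applied to $g_\alpha\in E_{M^\ast}(\Omega_T)$ further yields $g_\alpha(t,\cdot)\in E_{M^\ast}(\Omega)$ for a.e. $t$, making the linear functional $\psi\mapsto G_\psi(t)$ weak-$\ast$ continuous on $\Pi L_M(\Omega)$ for such $t$. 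Discarding a single null set valid for a countable family $\{\psi_j\}$ norm-dense in $\mathcal{C}^{\infty}_{0}(\Omega)\subset W^{1}L_{M}(\Omega)$ and extending by continuity, $G_\psi(t)$ vanishes on all of $\mathcal{C}^{\infty}_{0}(\Omega)$ for a.e. $t$, hence by weak-$\ast$ continuity on its weak-$\ast$ closure $W^{1}_{0}L_{M}(\Omega)$. Since $u(t,\cdot)\in W^{1}_{0}L_{M}(\Omega)$ a.e., this forces $G_{u(t,\cdot)}(t)=0$ a.e., and integrating in $t$ gives $\Lambda(u)=0$.

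The delicate step is the passage from the $t$-pointwise vanishing ``$G_{\psi_j}(t)=0$ for each $j$'' to the uniform conclusion ``$G_\psi(t)=0$ for every $\psi$ in $W^{1}_{0}L_{M}(\Omega)$ and for a.e.\ $t$''. This calls for a single exceptional null set valid across a countable separating family, followed by two distinct continuity arguments: norm continuity in $\psi$ to extend from $\{\psi_j\}$ to all of $\mathcal{C}^{\infty}_{0}(\Omega)$, and $\sigma(\Pi L_M,\Pi E_{M^\ast})$-continuity to reach the weak-$\ast$ closure $W^{1}_{0}L_{M}(\Omega)$. This is exactly where both $(\mathcal{L}^{1}_{loc})$ hypotheses are exploited: $(\mathcal{L}^{1}_{loc})$ on $M$ makes $\mathcal{C}^{\infty}_{0}(\Omega)\subset W^{1}L_M(\Omega)$ with its natural countable dense subfamily, while $(\mathcal{L}^{1}_{loc})$ on $M^\ast$ is what allows Fubini to deliver $g_\alpha(t,\cdot)\in E_{M^\ast}(\Omega)$ a.e., without which $\psi\mapsto G_\psi(t)$ would fail to be weak-$\ast$ continuous and the extension from $\mathcal{C}^{\infty}_{0}(\Omega)$ to its weak-$\ast$ closure would be lost.
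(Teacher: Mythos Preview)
Your argument for the main inclusion $W^{1,x}_{0}L_{M}(\Omega_T)\subset\overline{\mathcal{C}^{\infty}_0(\Omega_T)}^{\sigma(\Pi L_M,\Pi E_{M^\ast})}$ is correct and takes a genuinely different route from the paper. The paper proceeds constructively: given $u$ and test functions $w_0,w$ defining a basic weak-$\ast$ neighborhood, it builds an explicit approximant $v_\varepsilon\in\mathcal{C}^\infty_0(\Omega_T)$ by (i) choosing, for each fixed $t$, some $v(t,\cdot)\in\mathcal{C}^\infty_0(\Omega)$ close to $u(t,\cdot)$ in the pairing with $w_0(t,\cdot),w(t,\cdot)$ (using that $u(t,\cdot)\in W^1_0L_M(\Omega)$), (ii) truncating the resulting $v$ in time by a characteristic function, (iii) mollifying in time, and finally passing from smooth tests to all of $E_{M^\ast}$ by norm density. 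Your bipolar route sidesteps this construction entirely and shows directly that any $(g_\alpha)\in\Pi E_{M^\ast}(\Omega_T)$ annihilating $\mathcal{C}^\infty_0(\Omega_T)$ must annihilate $u$, by slicing $(g_\alpha)$ in $t$ via Fubini and invoking the definition of $W^1_0L_M(\Omega)$ on each time slice. Your approach is cleaner and avoids a genuine loose end in the paper's argument---the joint $(t,x)$-measurability of the slice-by-slice choice $v(t,\cdot)$ is never addressed there---while the paper's construction is in the spirit of the explicit approximating sequences used later in the section.

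One caution on your sketch for the reverse inclusion: you describe $W^{1,x}_0L_M(\Omega_T)$ as the common kernel of functionals built from tensor products $\phi(t)\psi_\alpha(x)$ with $(\psi_\alpha)$ a \emph{fixed} element of $(W^1_0L_M(\Omega))^\perp$. When $M$ is genuinely time-dependent this annihilator varies with $t$, so $t$-independent $(\psi_\alpha)$ cannot characterize the slice condition $u(t,\cdot)\in W^1_0L_{M(t,\cdot)}(\Omega)$; one would need $t$-dependent test elements and a verification that they lie in $\Pi E_{M^\ast}(\Omega_T)$. The paper does not argue this direction at all (it simply calls it ``trivial''), so the gap is not peculiar to your write-up, but it is worth flagging.
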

\begin{proof} 
	We only prove the inclusion $W^{1,x}_{0}L_{M}(\Omega_T)\subset \overline{\mathcal{C}^{\infty}_0(\Omega_T)}^{\sigma\left(\Pi L_M, \Pi E_{M^\ast}\right)}$, the inverse inclusion being trivial. Pick a function $u\in W^{1,x}_{0}L_{M}(\Omega_T)$  and let $\eta>0$ be arbitrary. We shall prove that  there exists a function $\overline{u}\in \mathcal{C}^\infty_0(\Omega_T)$  such that
	$$
	\left|\int_{\Omega_T}(u(z)-\overline{u}(z))w_0(z)dz\right|\leq\eta\quad
	\mbox{	and }\quad
	\left|\int_{\Omega_T}(\nabla u(z)-\nabla \overline{u}(z)):w(z)dz\right|\leq\eta,
	$$
	for all $(w_0,w)\in E_{M^\ast}(\Omega_T)\times E_{M^\ast}(\Omega_T)^{N}$. Let first assume that $w_0$ and $w$ be arbitrary respectively in $ \mathcal{C}^{\infty}_0(\Omega_T)$ and $ \mathcal{C}^{\infty}_0(\Omega_T)^{N}$.
	Since for $t\in(0,T)$ fixed the function $u(t,\cdot)\in W^{1}_{0}L_{M}(\Omega)$,  there is a sequence $v(t,\cdot)\in \mathcal{C}^\infty_0(\Omega)$ such that 
	$$
	\left|\int_{\Omega}(u(t,x)-v(t,x))w_0(t,x)dx\right|\leq\frac{\eta}{T}
	\mbox{	and }
	\left|\int_{\Omega}(\nabla u(t,x)-\nabla v(t,x)):w(t,x)dx\right|\leq\frac{\eta}{T}.
	$$
	Recall here that  since 
	$ \mathcal{C}^{\infty}_0(\Omega_T)\subset \mathcal{C}^{\infty}_0([0,T],\mathcal{C}^{\infty}_0(\Omega))$
	and since  
	$
	\mathcal{C}^{\infty}_0(\Omega_T)^{N}\subset \mathcal{C}^{\infty}_0([0,T],\mathcal{C}^{\infty}_0(\Omega)^{N})
	$
	the functions $w_0(t,\cdot)$ and $w(t,\cdot)$ are arbitrary in $\mathcal{C}^{\infty}_0(\Omega)$ and $\mathcal{C}^{\infty}_0(\Omega)^{N}$ respectively.  Then integrating over $(0,T)$ we get
	$$
		J_1:=\left|\int_{\Omega_T}|(u(z)-v(z))w_0(z)dz\right|\leq\eta
	\mbox{	and }
	I_1:=\left|\int_{\Omega_T}(\nabla u(z)-\nabla v(z)):w(z)dz\right|\leq\eta.
	$$
	For $x\in\Omega$ fixed, we define the sequence
	$v_{j}(\cdot,x)= v(\cdot,x)\chi_{K_j}(\cdot),$
	where $\chi_{K_j}$ stands for the characteristic function with respect to the time variable $t$ of the set
	$$
	K_j=\left\{t\in I,\; |t|\leq j,\; \mbox{dist}\left(t,]-\infty,0]\cup[T,+\infty[\right)\geq \frac{1}{j}\right\}.
	$$
	It is clear that $\{v_j\}_j$ is a sequence of functions with compact support in $\Omega_T$. Moreover observe that
	$
	| (v_{j}-v)w_0|\leq 2|v||w_0|\in L^1(\Omega_{T})
	$
	and	$v_{j}\rightarrow v$ a.e. in  $\Omega_T$ so that by Lebesgue's dominated convergence theorem there is $j_0>0$ large enough such that
	$$
	J_2:=\left|\int_{\Omega_T} (v(z)- v_{j_0}(z))w_0(z)dz\right|\leq\eta.
	$$
	In a similar way we also have 
	$
	I_2:=\left|\int_{\Omega_T}(\nabla v(z)-\nabla v_{j_0}(z)):w(z)dz\right|\leq\eta.
	$
	Let $J$ stands for the Friedrichs mollifiers kernel defined on $\mathbb{R}$ such that $\int_{\mathbb{R}}J(t)dt=1$. 
	For $\varepsilon>0,$ we  define 
	$$
	v_{\varepsilon}(t,x)=J_\varepsilon\ast 	v_{j_0}(t,x)=\int_{\mathbb{R}}J_\varepsilon(t-\tau)	v_{j_0}(\tau,x)d\tau
	=\int_{B(0,1)}	v_{j_0}(t-\varepsilon\tau,x)J(\tau)d\tau.
	$$
	For $\varepsilon<\mbox{dist }(\mbox{supp }v_{j_0},\partial [0,T])$ the sequence $v_{\varepsilon}$ belongs to $ \mathcal{C}^\infty_0(\Omega_T)$. Using Fubini's theorem we can write
	$$
	\begin{array}{lll}\displaystyle
		J_3:=\displaystyle\left|\int_{\Omega_T}(v_{\varepsilon}(z)-v_{j_0}(z))w_0(z)dz\right|\leq\displaystyle\int_{\Omega_T}\left|(v_{\varepsilon}(z)-v_{j_0}(z))w_0(z)\right|dz\\
		\leq\displaystyle\int_{\Omega_T}\int_{B(0,1)}J(\tau)|(v_{j_0}(t-\varepsilon\tau,x)-v_{j_0}(t,x))w_0(z)d\tau dz\\
		\leq\displaystyle\int_{\Omega_T}\int_{B(0,1)}J(\tau)\left|(v_{j_0}(t-\varepsilon\tau,x)-v_{j_0}(t,x))w_0(z)\right|d\tau dz\\
		\leq\displaystyle\int_{B(0,1)}J(\tau)\int_{\Omega_T}|(v_{j_0}(t-\varepsilon\tau,x)-v_{j_0}(t,x))w_0(z)|dzd\tau\\
		\leq\displaystyle\|w_0\|_{L^{\infty}(\Omega_{T})}\int_{B(0,1)}J(\tau)\int_{\Omega_T}|(v_{j_0}(t-\varepsilon\tau,x)-v_{j_0}(t,x))|dzd\tau.
	\end{array}
	$$
	Thanks to \cite[Theorem 2.4.2]{book_KJF} there is $\rho>0$ such that whenever $\varepsilon\tau<\rho$ it holds
	$
	\int_{\Omega_T}|(v_{j_0}(t-\varepsilon\tau,x)-v_{j_0}(t,x))|dz\leq \frac{\eta}{\|w_0\|_{L^{\infty}(\Omega_{T})}}.
	$
	Hence, the choice $\varepsilon<\min\left(\mbox{dist }(\mbox{supp }v_{j_0},\partial [0,T]),\frac{\rho}{T}\right)$ yields
	$$
	J_3:=\left|\int_{\Omega_T}(v_{\varepsilon}(z)-v_{j_0}(z))w_0(z)dz\right|\leq \eta.
	$$
	Similarly, since $\nabla v_{\varepsilon}(t,x)=J_\varepsilon\ast\nabla v_{j_0}(t,x)$ for  $\varepsilon$ sufficiently small  we obtain 
	$$
	I_3:=\left|\int_{\Omega_T}(\nabla v_{\varepsilon}(z)-\nabla v_{j_0}(z)):w(z)dz\right|\leq \eta.
	$$
	Therefore, combining $J_1$, $J_2$ and $J_3$ and then $I_1$, $I_2$ and  $I_3$ for  $\varepsilon$ sufficiently small  we get
	\begin{equation}\label{eq_est}
		\left|	\int_{\Omega_T}(u(z)-v_{\varepsilon}(z))w_0(z)dz\right|\leq3\eta \mbox{ and }
		\left|	\int_{\Omega_T}(\nabla u(z)-\nabla v_{\varepsilon}(z)):w(z)dz\right|\leq3\eta.
	\end{equation}
	Finally, since $M^{\ast}$ satisfies (\ref{incBM}) on $\Omega_T$ by \cite[Theorem 2.2]{AY2018} smooth functions compactly supported in $\Omega_{T}$ are norm dense in $E_{M^{\ast}}(\Omega_T)$ the estimations (\ref{eq_est})  hold true for all $w_0\in E_{M^\ast}(\Omega_T)$ and $w\in  E_{M^\ast}(\Omega_T)^{N}$. So we obtain the following inclusion 
	$$
	W^{1,x}_{0}L_{M}(\Omega_T)\subset\overline{\mathcal{C}^{\infty}_0(\Omega_T)}^{\sigma\left(\Pi L_M ,\Pi E_{M^\ast}\right)}.
	$$
	This achieves the proof.
\end{proof}
\begin{theorem}\label{th_appr_spt2}
	Let $\Omega$ be an open subset of $\mathbb{R}^N$ satisfying the segment property and let $(M,M^\ast)$ be a pair of complementary $\Phi(\Omega_T)$-functions such that $M$ satisfy (\ref{X1}) on $\Omega_T$ and $M^\ast$ satisfies (\ref{incBM}) on $\Omega_T$. Then for any $u\in 	W^{1,x}_{0}L_{M}(\Omega_T)$, there exists a sequence of functions $u_k\in \mathcal{C}^\infty_0(\Omega_T)$ such that $D^\alpha u_k\xrightarrow[k\to\infty]{} D^\alpha u$, $|\alpha|\leq 1$, in modular sense in $L_M(\Omega_T)$.
\end{theorem}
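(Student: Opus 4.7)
The plan is to adapt the approximation scheme of Theorem \ref{th_appr_spt} by exploiting the $W^{1,x}_0$-hypothesis so as to force the smooth approximants to have compact support inside $\Omega_T$ rather than merely in $\overline{\Omega}_T$. As in Theorem \ref{th_appr_spt}, it suffices, for every $\eta>0$, to produce a function $w\in\mathcal{C}^\infty_0(\Omega_T)$ and a parameter $\lambda>0$ with $\int_{\Omega_T}M(z, |D^\alpha u-D^\alpha w|/\lambda)\,dz\le\eta$ for all spatial multi-indices $|\alpha|\le 1$, and then to extract a sequence by a standard diagonal argument.

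First I would reduce to $u$ with compact time-support. Take $\eta_k\in\mathcal{C}^\infty_0((0,T))$ with $0\le\eta_k\le 1$ and $\eta_k\equiv 1$ on $[1/k, T-1/k]$, and set $u^k:=\eta_k u\in W^{1,x}_0L_M(\Omega_T)$. Since no time-derivative appears in the modular, $D^\alpha u^k=\eta_k D^\alpha u$ for spatial $|\alpha|\le 1$, and $|D^\alpha u-D^\alpha u^k|=(1-\eta_k)|D^\alpha u|\le|D^\alpha u|$ tends to $0$ a.e. Hence Lebesgue's dominated convergence applied to $M(z, |D^\alpha u-D^\alpha u^k|/\lambda_1)\le M(z, |D^\alpha u|/\lambda_1)\in L^1(\Omega_T)$ yields modular convergence $u^k\to u$. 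Then I would regularize in time: extending $u^k$ by zero to $\mathbb{R}\times\Omega$ and taking a 1D Friedrichs mollifier $J_\varepsilon$, set $u^\varepsilon:=J_\varepsilon*_t u^k$. For $\varepsilon<\mbox{dist}(\supp_t u^k,\{0,T\})$, the function $u^\varepsilon$ has compact time-support in $(0,T)$ and is $\mathcal{C}^\infty$ in $t$. Each slice $u^\varepsilon(t,\cdot)=\int_{\mathbb{R}}J_\varepsilon(t-\tau)u^k(\tau,\cdot)\,d\tau$ is a weak-$\ast$ limit of Riemann sums of elements of $W^1_0L_M(\Omega)$, hence lies in $W^1_0L_M(\Omega)$ by the weak-$\ast$ closedness recorded in Theorem \ref{lm_appr_spt2}. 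Modular convergence $u^\varepsilon\to u^k$ and $\nabla_x u^\varepsilon\to\nabla_x u^k$ in $L_M(\Omega_T)$ as $\varepsilon\to 0$ follows from the continuity of time-translations in the modular sense, itself a consequence of the log-H\"older condition (\ref{X1}).

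It remains to approximate $u^\varepsilon$, which is smooth in $t$, compactly supported in time, and has $u^\varepsilon(t,\cdot)\in W^1_0L_M(\Omega)$, by $\mathcal{C}^\infty_0(\Omega_T)$-functions modularly. The key ingredient is the modular density $\mathcal{C}^\infty_0(\Omega)\subset W^1_0L_M(\Omega)$ under (\ref{X1}) from \cite{AYGS2017}. Concretely, I would take a spatial cutoff $\zeta_\delta\in\mathcal{C}^\infty_0(\Omega)$ equal to $1$ on $\{x:\mbox{dist}(x,\partial\Omega)>2\delta\}$ and supported in $\{x:\mbox{dist}(x,\partial\Omega)>\delta\}$, and a spatial mollifier $J^{(x)}_\rho$ with $\rho<\delta/2$, and set $w:=J^{(x)}_\rho*_x(\zeta_\delta u^\varepsilon)$. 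Then $w$ has compact support inside $\Omega_T$ (by the $\delta$-buffer in $x$ and the compact time-support in $t$), inherits $\mathcal{C}^\infty$-smoothness in $t$ from $u^\varepsilon$ and in $x$ from $J^{(x)}_\rho$, so $w\in\mathcal{C}^\infty_0(\Omega_T)$. For each fixed $t$, $w(t,\cdot)\to u^\varepsilon(t,\cdot)$ modularly in $W^1L_M(\Omega)$ as $\delta,\rho\to 0$ is precisely the slice-wise content of the density result of \cite{AYGS2017} under (\ref{X1}); integrating in $t$ with the aid of the compact time-support and dominated convergence lifts the slice-wise convergence to modular convergence on $\Omega_T$. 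A diagonal selection $(k_n,\varepsilon_n,\delta_n,\rho_n)$ then produces the desired sequence. The main obstacle is precisely this last step -- turning the $\mathcal{C}^\infty_0(\overline{\Omega}_T)$-approximants of Theorem \ref{th_appr_spt} into $\mathcal{C}^\infty_0(\Omega_T)$-approximants -- and it is overcome by invoking the $W^{1,x}_0$-structure through the spatial density $\mathcal{C}^\infty_0(\Omega)\subset W^1_0L_M(\Omega)$ available under (\ref{X1}).
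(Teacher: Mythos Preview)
Your approach differs substantially from the paper's, and there is a genuine gap in the spatial step.

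The paper's proof is essentially one line: by Theorem~\ref{lm_appr_spt2} the space $W^{1,x}_{0}L_{M}(\Omega_T)$ coincides with the weak-$^\ast$ closure of $\mathcal{C}^\infty_0(\Omega_T)$ in $W^{1,x}L_M(\Omega_T)$, and since $\Omega_T$ itself satisfies the segment property in $\mathbb{R}^{N+1}$, the argument of \cite[Theorem~3]{AYGS2017} (partition of unity, inward translation along segment vectors, then mollification) applies directly on $\Omega_T$, upgrading weak-$^\ast$ density to modular density. No separation of the time and space variables is performed.

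Your scheme, by contrast, handles time and space separately, and the spatial part does not go through. You assert that $w(t,\cdot)=J^{(x)}_\rho\ast_x(\zeta_\delta u^\varepsilon(t,\cdot))\to u^\varepsilon(t,\cdot)$ modularly in $W^1L_M(\Omega)$, calling this ``precisely the slice-wise content of the density result of \cite{AYGS2017}''. But \cite{AYGS2017} does \emph{not} use a boundary cutoff; it uses inward translation via the segment property. The distinction matters because $\nabla(\zeta_\delta v)=\zeta_\delta\nabla v+(\nabla\zeta_\delta)v$, and the second term has magnitude $\sim\delta^{-1}|v|$ on a boundary strip of width $\sim\delta$. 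Controlling
\[
\int_{\{\delta<\mathrm{dist}(x,\partial\Omega)<2\delta\}} M\Big(t,x,\frac{|v(x)|}{\lambda\delta}\Big)\,dx
\]
as $\delta\to 0$ would require a Hardy-type inequality in the Musielak setting, which is not available under the stated hypotheses (no $\Delta_2$, only the segment property and (\ref{X1})). So the modular convergence you claim is unjustified, and the reference to \cite{AYGS2017} does not supply it.

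A smaller point: Theorem~\ref{lm_appr_spt2} characterises $W^{1,x}_0L_M(\Omega_T)$, not $W^1_0L_M(\Omega)$, so it does not directly show that the time-mollified slice $u^\varepsilon(t,\cdot)$ stays in $W^1_0L_M(\Omega)$; that would need something like Theorem~\ref{proposition_equality} together with standard Bochner arguments in $W^{1,1}_0(\Omega)$. The principal issue, though, is the cutoff. The fix is either to work globally on $\Omega_T$ as the paper does, or to replace your cutoff-and-mollify step by the translation-based construction of \cite{AYGS2017} applied with $t$-independent parameters.
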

\begin{proof} 
	In view of Theorem \ref{lm_appr_spt2}, and since $\Omega_T$ satisfies the segment property the proof flows exactly by a similar argument as \cite[Theorem 3]{AYGS2017}.
\end{proof}
\begin{corollary}\label{cor_appr_spt} 
	Let $\Omega$ be an open subset of $\mathbb{R}^N$, $N\geq1$, satisfying the segment property and let $(M,M^\ast)$ be a pair of complementary $\Phi(\Omega_T)$-functions such that $M$ satisfies (\ref{X1}) on $\Omega_T$ and $M^\ast$ satisfies (\ref{incBM}) on $\Omega_T$. For every $u\in 	W^{1,x}_{0}L_{M}(\Omega_T)$ 
	(resp. $u\in W^{1,x}_{0}E_{M}(\Omega_T)$) there exists a sequence $\{u_k\}_k\subset\mathcal{C}^\infty_0([0,T];\mathcal{C}^\infty_0(\Omega))$ 
	such that $D^\alpha u_k\xrightarrow[k\to\infty]{}D^{\alpha} u$, $|\alpha|\leq1$, modularly (resp. in norm) in $L_M(\Omega_T)$.
\end{corollary}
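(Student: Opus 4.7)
The plan is to treat the two assertions of the corollary separately, exploiting the approximation results already established in the paper.

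For $u\in W^{1,x}_{0}L_{M}(\Omega_T)$, I would invoke Theorem \ref{th_appr_spt2} directly. It supplies a sequence $\{u_k\}\subset\mathcal{C}^{\infty}_0(\Omega_T)$ with $D^\alpha u_k\to D^\alpha u$ modularly in $L_M(\Omega_T)$ for all $|\alpha|\leq 1$. According to the conventions fixed in Section \ref{background_par}, a function compactly supported in $\Omega_T$ extends by zero to a member of $\mathcal{C}^\infty_0(\mathbb{R}^{N+1})$ whose time slices have compact support in $\Omega$; hence $\mathcal{C}^\infty_0(\Omega_T)\subset \mathcal{C}^\infty_0([0,T];\mathcal{C}^\infty_0(\Omega))$ and the statement follows immediately.

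For $u\in W^{1,x}_{0}E_{M}(\Omega_T)$, I would retrace the three-stage construction used in the proof of Theorem \ref{lm_appr_spt2}, tracking how $E_M$-membership upgrades each modular estimate to one valid for \emph{every} scaling parameter $\lambda>0$. In detail: (i) for a.e.\ $t\in(0,T)$, use that $W^{1}_0E_M(\Omega)$ is the $\|\cdot\|_{W^1L_M(\Omega)}$-closure of $\mathcal{C}^\infty_0(\Omega)$ to pick $v(t,\cdot)\in\mathcal{C}^\infty_0(\Omega)$ approximating $u(t,\cdot)$, with joint measurability supplied by a measurable-selection argument as in Theorem \ref{lm_appr_spt2}; (ii) truncate in time by the cutoff $\chi_{K_j}$; (iii) mollify in time by $J_\varepsilon$. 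In each of these stages, the bound $M(z,|D^\alpha u|/\lambda)\in L^1(\Omega_T)$ for every $\lambda>0$, which holds since $u$ and $\nabla u$ lie in $E_M(\Omega_T)$, allows the Lebesgue dominated convergence argument to be run with arbitrary $\lambda$, so the modular errors vanish for every $\lambda>0$. Invoking \cite[Lemma 2.7]{AFY_ms19} then converts this to $L_M(\Omega_T)$-norm convergence. The resulting sequence lies in $\mathcal{C}^\infty_0([0,T];\mathcal{C}^\infty_0(\Omega))$: compact spatial support in $\Omega$ comes from stage (i), while the boundedness of $(0,T)$ automatically yields compact temporal support in $\mathbb{R}$ (note that, unlike in Theorem \ref{th_appr_spt2}, the definition of $\mathcal{C}^\infty_0([0,T];\mathcal{C}^\infty_0(\Omega))$ permits the support to touch $\{0,T\}$, so the time-truncation in (ii) is only a technical step and not a structural requirement on the final $u_k$).

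The main obstacle will be stage (i): upgrading the pointwise-in-$t$ norm approximation in $W^1L_M(\Omega)$ to a joint $L_M(\Omega_T)$-norm estimate compatible with the subsequent time mollification. This parallels exactly the translation from pointwise to joint estimates carried out in the proof of Theorem \ref{lm_appr_spt2} for the weak-$\ast$ topology; the point is that in the present $E_M$-setting the finiteness of the modular at every scale $\lambda>0$ replaces the duality argument against $E_{M^\ast}$ used there. Once this is in place, the combination of (i)--(iii) together with \cite[Lemma 2.7]{AFY_ms19} yields the desired norm convergence and completes the proof.
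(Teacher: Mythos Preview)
Your treatment of the $W^{1,x}_0L_M(\Omega_T)$ case is exactly the paper's: invoke Theorem~\ref{th_appr_spt2} and use the inclusion $\mathcal{C}^\infty_0(\Omega_T)\subset\mathcal{C}^\infty_0([0,T];\mathcal{C}^\infty_0(\Omega))$.

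For the $E_M$ case, however, your route differs from the paper's and carries a real gap. The paper does \emph{not} reuse the three-stage pointwise-in-$t$ construction from Theorem~\ref{lm_appr_spt2}; instead it re-runs the translate-and-mollify machinery of Theorem~\ref{th_appr_spt}, with one sign change: the spatial shift $r_i z_i$ is replaced by $-r_i z_i$, which pushes the support inward into $\Omega$ rather than outward, so that the approximant lands in $\mathcal{C}^\infty_0([0,T];\mathcal{C}^\infty_0(\Omega))$. Because Theorem~\ref{th_appr_spt} already handles norm convergence in the $E_M$ setting (arbitrary $\lambda$), this modification immediately gives both the modular and norm assertions without any measurable-selection argument. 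The resulting explicit sequence (labelled \eqref{seq_spt2}) is moreover reused verbatim in the proof of Theorem~\ref{th_appr_W_spt}.

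Your stage~(i), by contrast, asks for a jointly measurable map $t\mapsto v(t,\cdot)\in\mathcal{C}^\infty_0(\Omega)$ approximating $u(t,\cdot)$ in $W^1L_M(\Omega)$-norm; you flag this as the main obstacle, and rightly so---the argument in Theorem~\ref{lm_appr_spt2} only tests against fixed $w_0,w\in\mathcal{C}^\infty_0(\Omega_T)$ and never needs to control the modular of $u-v$ over $\Omega_T$. Upgrading that weak-$\ast$ argument to a joint $L_M(\Omega_T)$ modular bound requires either a genuine measurable-selection theorem or, as in Theorem~\ref{cor_appr_spt2} (which comes \emph{after} this corollary), replacing your time-only mollification in stage~(iii) by full space--time mollification via \cite[Lemma~12]{AYGS2017}. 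Either way, the paper's direct translate-and-mollify route is both shorter and free of this difficulty.
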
	
\begin{proof} 
Observing that $\mathcal{C}^\infty_0(\Omega_T)\subset\mathcal{C}^{\infty}_{0}([0,T];\mathcal{C}^{\infty}_{0}(\Omega))$ we can easily check that the set $\mathcal{C}^{\infty}_{0}([0,T];\mathcal{C}^{\infty}_{0}(\Omega))$ is dense modularly in $W^{1,x}_{0}L_{M}(\Omega_T)$ thanks to Theorem \ref{th_appr_spt2}. Nonetheless, for later use, we give an explicit construction of a sequence of $\mathcal{C}^{\infty}_{0}([0,T];\mathcal{C}^{\infty}_{ 0}( \Omega))$ that converges to an element of $W^{1,x}_{0}L_{M}(\Omega_T)$.\\
	Indeed following exactly the same lines as in Theorem \ref{th_appr_spt}, except that for $i=1,\cdots,p$ we take
	$$
	\varepsilon_{j,i}<\min\Big\{
	\mbox{dist}\left(\Gamma^t_{j,r_{j}}, \vartheta_i\cap[0,T]\right), \mbox{ dist}\left((\theta^{\prime}_i\cap \overline{\Omega})+r_{i}z_{i}, \mathbb{R}^N\setminus\Omega\right)\Big\}
	$$
	and we substitute $r_{i}$ by $-r_{i}$ in (\ref{seq_sp1}), that is
	\begin{equation}\label{seq_spt2}
		\begin{array}{lll}\displaystyle
			w&=\displaystyle\sum_{i=0}^p\sum_{j=0}^2(u_{j,i}^{\varepsilon_{j,i}})_{ (r_{j},-r_{i})}=\sum_{i=0}^p\sum_{j=0}^2J_{\varepsilon_{j,i}}*(u_{j,i})_{(r_{j},-r_{i})}\\
			&=\displaystyle\sum_{i=0}^p\sum_{j=0}^2\int_{B(0,1)} J(\tau,y)u_{j,i}(t+r_{j}z_{j}-\varepsilon_{j,i}\tau,x-r_{i}z_{i}-\varepsilon_{j,i} y)d\tau dy
		\end{array}
	\end{equation}
	The function $w$ belongs to $\mathcal{C}^{\infty}_{0}([0,T];\mathcal{C}^{\infty}_{0}(\Omega))$  and satisfies (\ref{eq1_th_appr_W1X0}).
\end{proof}
\begin{theorem}\label{cor_appr_spt2}
	Let $\Omega$ be an open subset of $\mathbb{R}^N$ and let $(M,M^\ast)$ be a pair of complementary $\Phi(\Omega_T)$-functions such that $M$ satisfies (\ref{X1}) on $\Omega_T$ and $M^\ast$ satisfies (\ref{incBM}) on $\Omega_T$. Then for any
	$u\in W^{1,x}_{0}E_{M}(\Omega_T)$, there exists a sequence of functions $u_k\in \mathcal{C}^\infty_0(\Omega_T)$ such that $D^\alpha u_k\xrightarrow[k\to\infty]{} D^\alpha u$, $|\alpha|\leq 1$, in norm in $L_M(\Omega_T)$.
\end{theorem}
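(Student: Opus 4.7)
The plan is to repeat verbatim the construction carried out in the proof of Theorem \ref{th_appr_spt2} --- which already produces a sequence $\{u_k\}\subset\mathcal{C}^\infty_0(\Omega_T)$ with $D^\alpha u_k\to D^\alpha u$ modularly in $L_M(\Omega_T)$ --- and to upgrade the modular convergence to norm convergence by exploiting the hypothesis $u\in W^{1,x}_{0}E_{M}(\Omega_T)$. The conceptual point is that, since each $D^\alpha u$ belongs to $E_M(\Omega_T)$, the modular $\int_{\Omega_T} M(z,|D^\alpha u|/\lambda)\,dz$ is finite for \emph{every} $\lambda>0$; this allows all the Luxemburg constants that appear in the approximation estimates to be chosen arbitrarily small.

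First I would apply the cutoff $u_n=\chi_n u$ from Step~1 of Theorem \ref{th_appr_spt}. Because $M(\cdot,|D^\alpha u|/\lambda_1)\in L^1(\Omega_T)$ for every $\lambda_1>0$, the dominated convergence argument leading to (\ref{con_seq_ur}), and hence to (\ref{eq_cutt_off_par}), holds with $\lambda_2$ arbitrarily small. Next, following the proof of Theorem \ref{th_appr_spt2}, I would localize via a partition of unity $\{\psi_{j,i}\}$ adapted to the finite covering of $\mathrm{supp}(u_n)\cap\partial\Omega_T$ provided by the segment property of $\Omega$ together with the interior-shift property of the endpoints of $(0,T)$, and then regularize each $u_{j,i}=u_n\psi_{j,i}$ by mollification preceded by the inward translations used in (\ref{seq_spt2}) (with the sign convention $r_i\mapsto -r_i$ in the spatial direction, as in Corollary \ref{cor_appr_spt}, and an analogous inward shift $r_j z_j$ near the temporal endpoints), so that the resulting smooth function is compactly supported in the open set $\Omega_T$. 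Since each localized piece $u_{j,i}$ remains in $W^{1,x}E_{M}(\Omega_T)$, the estimates (\ref{con_seq_u0})--(\ref{con_seq_u3}) and the combined estimate (\ref{con_seq_u4}) are valid with arbitrary constants $\lambda_3,\ldots,\lambda_7$.

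To conclude, I would invoke \cite[Lemma 2.7]{AFY_ms19}: modular convergence with arbitrarily small Luxemburg constants is equivalent to norm convergence in $L_M(\Omega_T)$, so $D^\alpha u_k\to D^\alpha u$ in norm for $|\alpha|\leq 1$, which is precisely the stated conclusion.

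The principal obstacle is arranging the translations so that every localized mollification has support strictly inside the open set $\Omega_T$ and not merely inside $\overline{\Omega}_T$: this is the main reason why Theorem \ref{th_appr_spt} alone only yields functions in $\mathcal{C}^\infty_0(\overline{\Omega}_T)$. The spatial direction is handled by the segment property of $\Omega$ combined with the sign reversal exploited in Corollary \ref{cor_appr_spt}, while near $t=0$ and $t=T$ one additionally shifts inward in time, which is elementary since $(0,T)$ is an interval. Once the supports are correctly placed, the rest is careful bookkeeping of the Luxemburg constants through the construction of Theorem \ref{th_appr_spt2}, using at every step that $u\in E_M$ makes these constants free.
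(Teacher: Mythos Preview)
Your argument imports the segment property of $\Omega$, which is \emph{not} among the hypotheses of Theorem~\ref{cor_appr_spt2}. You write explicitly that the spatial direction is ``handled by the segment property of $\Omega$ combined with the sign reversal exploited in Corollary~\ref{cor_appr_spt}''; but compare the statement here with those of Theorem~\ref{th_appr_spt2} and Corollary~\ref{cor_appr_spt}, where the segment property is assumed, and notice that it has been deliberately dropped. Without it there is no covering $\{\theta_i\}$ of $\partial\Omega$ with associated inward vectors $z_i$, so the whole translation--mollification machinery you invoke near the spatial boundary is unavailable. Your route therefore proves a weaker statement than the one asserted.

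The paper's proof circumvents the boundary geometry entirely by exploiting the very definition of the space: for almost every fixed $t$ one has $u(t,\cdot)\in W^1_0E_M(\Omega)$, which \emph{by definition} is the norm closure of $\mathcal{C}^\infty_0(\Omega)$, so a spatial approximant $v(t,\cdot)\in\mathcal{C}^\infty_0(\Omega)$ exists without any regularity of $\partial\Omega$. One then truncates in the time variable by multiplying by $\chi_{K_j}$ (rather than translating---note that $u$ need not vanish near $t=0$ or $t=T$, so an inward time shift alone would not yield compact support), and finally mollifies the resulting compactly supported function inside $\Omega_T$ using \cite[Lemma~12]{AYGS2017}. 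The passage from modular estimates with arbitrary $\lambda$ to norm convergence via \cite[Lemma~2.7]{AFY_ms19} is the same as in your outline; the difference lies in how the compactly supported intermediate approximant is produced.
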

\begin{proof} 
Set $z:=(t,x)$ and let $\lambda>0$ be arbitrary. In view of \cite[Lemma 2.7]{AFY_ms19}  it is sufficient to show that  for an arbitrary $\eta>0$ there exists a sequence $w\in \mathcal{C}^\infty_0(\Omega_T)$ such that
	$$
	\int_{\Omega_T}M\left(z,\frac{|D^{\alpha}u(z)-D^{\alpha}w(z)|}{\lambda}\right)dz\leq\eta.
	$$
	Let $u\in W^{1,x}_{0}E_{M}(\Omega_T)$. One has $\int_{\Omega_T} M\left(z,\frac{|D^{\alpha}u(z)|}{\lambda}\right)dz<\infty$. As for a fixed $t\in(0,T)$ we have $u(t,\cdot)\in W^1_0E_{M}(\Omega)$,  for an arbitrary $\eta>0$ there is a function $v(t,\cdot)\in \mathcal{C}^\infty_0(\Omega)$ such that  for all $|\alpha|\leq1$ we have 
	$
	\frac{1}{\lambda}\|D^{\alpha} u(t)-D^{\alpha} v(t)\|_{L_M(\Omega)}\leq\frac{\eta}{3T}.
	$
	For $0<\eta<3T$ we have
	$
	\int_{\Omega}M\left(t,x,\frac{|D^{\alpha} u(t,x)-D^{\alpha} v(t,x)|}{\lambda}\right)dx\leq\frac{\eta}{3T}.
	$
	Integrating over $(0,T)$ we get
	$$
	J_1:=\int_{\Omega_T}M\left(z,\frac{|D^{\alpha} u(z)-D^{\alpha} v(z)|}{\lambda}\right)dz\leq\frac{\eta}{3}.
	$$
	For $x\in\Omega$ fixed, we define the sequence
	$
	v_{j}(\cdot,x)= v(\cdot,x)\chi_{K_j}(\cdot),
	$
	where $\chi_{K_j}$ stands for the characteristic function with respect to the time variable $t$ of the set
$$
K_j=\left\{t\in I,\; |t|\leq j,\; \mbox{dist}\left(t,]-\infty,0]\cup[T,+\infty[\right)\geq \frac{1}{j}\right\}.
$$
	On  one hand we have
	$$
	\begin{array}{lll}
		M\left(z,\frac{|D^{\alpha} v_{j}(z)-D^{\alpha} v(z)|}{4\lambda}\right)
		&\leq M\left(z,\frac{|D^{\alpha}v(z)|}{2\lambda}\right)\\
		&\leq \left(M\left(z,\frac{|D^{\alpha}v(z)-D^{\alpha}u(z)|}{\lambda}\right)
		+ M\left(z,\frac{|D^{\alpha}u(z)|}{\lambda}\right)\right)\in L^1(\Omega_T).
	\end{array}
	$$
	On the other hand, since the sequence $\{D^{\alpha} v_{j}\}_j$ converges to $D^{\alpha} v$ almost everywhere in $\Omega_T$ as $j$ tends to infinity, we have
	$
	M\left(z,\frac{|D^{\alpha} v_{j}(z)-D^{\alpha} v(z)|}{4\lambda}\right)\rightarrow 0, \mbox{ a.e. in } \Omega_T.
	$
	Hence, by Lebesgue's dominated convergence theorem there is $j_0>0$ large enough such that
	$$
	J_2:=\int_{\Omega_T}M\left(z,\frac{|D^{\alpha} v(z)-D^{\alpha} v_{j_0}(z)|}{4\lambda}\right)dz\leq\frac{\eta}{3}.
	$$
	Note that  the function $v_{j_0}\in  W^1E_{M}(\Omega_T)$ and has a compact support in $\Omega_T$, so that by \cite[Lemma 12]{AYGS2017} for $\eta>0$, with $\eta/3<1$, there exist a sequence  $\{w_{\varepsilon}\}\in \mathcal{C}^\infty_0(\Omega_T)$ and  $\varepsilon_0>0$ such that for every $\varepsilon\leq\varepsilon_0$ we have
	$$
	J_3:=\int_{\Omega_T}M\left(z,\frac{|D^{\alpha}v_{j_0}(z)-D^{\alpha}
		w_{\varepsilon}(z)|}{\lambda}\right)dz\leq \frac{1}{\lambda}\|D^{\alpha}v_{j_0}-D^{\alpha}
	w_{\varepsilon}\|_{L_M(\Omega_{T})}\leq\frac{\eta}{3}.
	$$
	Then,  combining $J_1$, $J_2$ and $J_3$ and using the convexity of $M(z,\cdot)$, for  $\varepsilon\leq\varepsilon_0$  we get $\int_{\Omega_T}M\left(z,\frac{|D^{\alpha}u(z)-D^{\alpha}
		w_{\varepsilon}(z)|}{\lambda}\right)dz\leq\eta$	for arbitrary $\lambda>0$.	This achieves the proof.
\end{proof}
\begin{theorem}\label{proposition_equality}
	Let $\Omega$ be a bounded open subset in $\mathbb{R}^N$ having the segment property and let $M\in\Phi(\Omega_T)$. Assume that $M$ satisfies (\ref{X1}) on $\Omega$ and it's complementary $M^\ast$ satisfies (\ref{incBM}) on $\Omega$.
	Then we get 
	$$
	W^{1,x}_{0}L_{M}(\Omega_T)=\Big\{u:(0,T)\to W^{1,1}_0(\Omega): u\in L_M(\Omega_T), \nabla u\in L_M(\Omega_T)^N\Big\}.
	$$
\end{theorem}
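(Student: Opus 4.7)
\noindent\emph{Plan of proof.} The plan is to prove the two inclusions separately, reducing the forward one to a spatial embedding via Mazur's lemma, and handling the reverse one by following the proof of Theorem~\ref{lm_appr_spt2} with the section-wise step replaced by a $W^{1,1}_0$-density argument.

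\smallskip\noindent\emph{Forward inclusion} $W^{1,x}_{0}L_{M}(\Omega_T)\subseteq \bigl\{u:u(t,\cdot)\in W^{1,1}_0(\Omega)\text{ a.e.},\ u,\nabla u\in L_M(\Omega_T)\bigr\}$. Let $u$ belong to $W^{1,x}_{0}L_{M}(\Omega_T)$; by definition $v:=u(t,\cdot)\in W^1_0L_M(\Omega)$ for a.e.\ $t$, so it suffices to prove the spatial embedding $W^1_0L_M(\Omega)\subseteq W^{1,1}_0(\Omega)$. Pick $\phi_n\in\mathcal{C}^\infty_0(\Omega)$ with $(\phi_n,\nabla\phi_n)\to(v,\nabla v)$ in $\sigma(\Pi L_M,\Pi E_{M^\ast})$ (such a sequence exists because $E_{M^\ast}(\Omega)$ is separable by the norm density of $\mathcal{C}^\infty_0(\Omega)$, hence the weak-$\ast$ topology is metrizable on bounded sets). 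Since $\Omega$ is bounded and $M^\ast$ satisfies (\ref{incBM}) on $\Omega$, every bounded measurable function on $\Omega$ lies in $E_{M^\ast}(\Omega)$; hence the weak-$\ast$ convergence gives $\phi_n\rightharpoonup v$ and $\nabla\phi_n\rightharpoonup \nabla v$ weakly in $L^1(\Omega)$. Applying Mazur's lemma to the pair $(\phi_n,\nabla\phi_n)$ in $L^1(\Omega)\times L^1(\Omega)^N$ yields convex combinations $\widetilde\phi_n\in\mathcal{C}^\infty_0(\Omega)$ converging strongly in $L^1$ to $v$ and $\nabla v$ respectively, whence $v\in W^{1,1}_0(\Omega)$.

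\smallskip\noindent\emph{Reverse inclusion.} Suppose $u(t,\cdot)\in W^{1,1}_0(\Omega)$ for a.e.\ $t$ and $u,\nabla u\in L_M(\Omega_T)$. By Theorem~\ref{lm_appr_spt2} it suffices to place $u$ in the $\sigma(\Pi L_M,\Pi E_{M^\ast})$-closure of $\mathcal{C}^\infty_0(\Omega_T)$. I run the same four-step scheme as in the proof of Theorem~\ref{lm_appr_spt2}: (i) section-wise smooth approximation in space; (ii) time truncation by the indicator of the sets $K_j$; (iii) time mollification with the Friedrichs kernel $J_\varepsilon$; (iv) extension of the test functions from $\mathcal{C}^\infty_0(\Omega_T)$ to $E_{M^\ast}(\Omega_T)$ by the norm density statement \cite[Theorem 2.2]{AY2018} and H\"older's inequality. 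Steps (ii)--(iv) transfer verbatim. For step (i), fix smooth tests $(w_0,w)\in\mathcal{C}^\infty_0(\Omega_T)\times\mathcal{C}^\infty_0(\Omega_T)^N$ and $\eta>0$. Using the segment property of $\Omega$, I produce (by inward shift along the segment-property vectors followed by spatial mollification applied to each section $u(t,\cdot)$) a function $v$, jointly measurable in $(t,x)$, with $v(t,\cdot)\in\mathcal{C}^\infty_0(\Omega)$ for a.e.\ $t$ and
\[
\int_\Omega|u(t,\cdot)-v(t,\cdot)|\,dx+\int_\Omega|\nabla u(t,\cdot)-\nabla v(t,\cdot)|\,dx\le\frac{\eta}{T\bigl(\|w_0\|_{L^\infty(\Omega_T)}+\|w\|_{L^\infty(\Omega_T)}+1\bigr)}.
\]
Multiplying against the bounded tests and integrating in $t$ gives the analogues of the $J_1$- and $I_1$-estimates of Theorem~\ref{lm_appr_spt2}, namely $|\int_{\Omega_T}(u-v)w_0|\le\eta$ and $|\int_{\Omega_T}(\nabla u-\nabla v):w|\le\eta$. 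The rest of the argument — composing with the time-truncation $v\mapsto v_{j_0}$ and the time-mollification $v_{j_0}\mapsto v_\varepsilon\in\mathcal{C}^\infty_0(\Omega_T)$, then extending the final estimate from smooth tests to $E_{M^\ast}$-tests via norm density — is exactly the corresponding portion of the proof of Theorem~\ref{lm_appr_spt2}, so $u$ belongs to the desired weak-$\ast$ closure.

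\smallskip\noindent\emph{Main obstacle.} The substantive point is the section-wise step (i) of the reverse inclusion: producing a \emph{measurable-in-$t$} selection $v(t,\cdot)\in\mathcal{C}^\infty_0(\Omega)$ approximating $u(t,\cdot)\in W^{1,1}_0(\Omega)$ in $W^{1,1}$-norm uniformly enough for the prescribed tests. This is handled by replacing an abstract appeal to $W^{1,1}_0$-density by an explicit construction — partition of unity, inward shift given by the segment-property vectors of $\Omega$, and a standard mollifier in the space variable — all of which commute with $t$ and therefore preserve joint measurability. The role of the hypotheses (\ref{X1}) on $M$ and (\ref{incBM}) on $M^\ast$ on $\Omega_T$ is to make the weak-$\ast$ topology well-defined and to legitimize the density and H\"older steps; everything else reduces to bookkeeping already present in Theorems~\ref{th_appr_spt}--\ref{lm_appr_spt2}.
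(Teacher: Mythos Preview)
Your argument is essentially correct, but it takes a much longer route than the paper's. The paper reduces the whole statement to a single spatial identity, namely
\[
W^{1}_{0}L_M(\Omega)=W^{1,1}_{0}(\Omega)\cap W^{1}L_M(\Omega),
\]
cited from \cite[Theorem~1.4]{AY2018}, which is available precisely under the hypotheses (\ref{X1}) on $M$ and (\ref{incBM}) on $M^\ast$ together with the segment property. With this in hand, both inclusions are immediate: for $u\in W^{1,x}_{0}L_{M}(\Omega_T)$ one has $u(t,\cdot)\in W^{1}_{0}L_M(\Omega)\subset W^{1,1}_0(\Omega)$ for a.e.\ $t$; conversely, if $u(t,\cdot)\in W^{1,1}_0(\Omega)$ and $u,\nabla u\in L_M(\Omega_T)$, then Fubini gives $u(t,\cdot)\in W^{1}L_M(\Omega)$ for a.e.\ $t$, hence $u(t,\cdot)\in W^{1,1}_0(\Omega)\cap W^{1}L_M(\Omega)=W^{1}_{0}L_M(\Omega)$, i.e.\ $u\in W^{1,x}_{0}L_{M}(\Omega_T)$ by definition. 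Your forward inclusion via Mazur's lemma is effectively a hand-made proof of one half of that cited spatial identity, and your reverse inclusion---rerunning the entire weak-$\ast$ approximation machinery of Theorem~\ref{lm_appr_spt2} with a $W^{1,1}_0$ section-wise step---is a direct assault on what the paper obtains in one line from the other half. The upside of your approach is that it is self-contained (you do not import the spatial characterization from \cite{AY2018}) and that you confront head-on the measurable-selection issue in step~(i), which the proof of Theorem~\ref{lm_appr_spt2} glosses over; the upside of the paper's approach is that it is two lines long and makes transparent that Theorem~\ref{proposition_equality} is nothing more than the time-parametrized version of a known spatial fact.
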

\begin{proof} of Theorem \ref{proposition_equality}.
	For $u\in W^{1,x}_{0}L_{M}(\Omega_T)$ one has   $u(t)\in W^1_0L_{M}(\Omega)$ for a.e $t\in (0,T)$. Then, thanks to \cite[Theorem 1.4]{AY2018} it follows that
	$
	W^{1}_{0}L_M(\Omega)=W^{1,1}_{0}(\Omega)\cap W^{1}L_M(\Omega).
	$
	Hence we get the following inclusion 
	$
	W^{1,x}_{0}L_{M}(\Omega_T)\subset\Big\{u:(0,T)\to W^{1,1}_0(\Omega): u\in L_M(\Omega_T), \nabla u\in L_M(\Omega_T)^N\Big\}.
	$ 
	On the other hand, since for all $|\alpha|\leq1$ we have $|D^\alpha u|\in L_M(\Omega_T)$, by Fubini's theorem we have $u(t)\in W^1L_M(\Omega)$ for a.e $t\in (0,T)$. Which proves the reverse inclusion.
\end{proof}
\section{Approximation in a dual space}\label{Sec_app time}
The main result of this section is Theorem \ref{th_appr_W_spt} where a density result in a dual space is given. We begin first by the following duality result.
\begin{theorem}\label{th_dual_MBS}
	Let $(M,M^\ast)$ be a pair of complementary $\Phi(\Omega_T)$-functions such that $M$ satisfies (\ref{X1}) on $\Omega_T$ and $M^\ast$ satisfies (\ref{incBM}) on $\Omega_T$.
	Then, the dual space of $W^{1,x}_{0}E_{M}(\Omega_T)$,
	denoted $(W^{1,x}_{0}E_{M}(\Omega_T))^\prime$, is isometrically isomorphic to the space $W^{-1,x}L_{M^\ast}(\Omega_T)$ defined by
$$
\begin{array}{lll}
W^{-1,x}L_{M^\ast}(\Omega_T):=&
\Big\{u:(0,T)\to W^{-1}L_{M^\ast}(\Omega): u=u_0-\operatorname{div } U,\\
&\mbox{ with } u_0\in L_{M^\ast}(\Omega_T)\mbox{ and } U=(u_{i})_{\substack{1\leq i\leq N}}\in L_{M^\ast}(\Omega_T)^N \Big\}
\end{array}
$$
endowed with the norm
\begin{equation}\label{norm_lm2+}
	\|u\|_{W^{-1,x}L_{M^\ast}(\Omega_T)}=
\inf\Big\{\|u_0\|_{L_{M^{\ast}}(\Omega_T)}+\sum_{i=1}^N\|u_{i}\|_{L_{M^{\ast}}(\Omega_T)},\;/ u=u_0-\operatorname{div }U\Big\}.
\end{equation}
\end{theorem}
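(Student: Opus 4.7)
The plan is to model the proof on the classical argument for the representation of the dual of the Sobolev space $W^{-1,p'}$, adapted to the Musielak framework. The core tool is the isometric embedding
$$
J\colon W^{1,x}_{0}E_{M}(\Omega_T)\longrightarrow \Pi E_M(\Omega_T):=\underbrace{E_M(\Omega_T)\times\cdots\times E_M(\Omega_T)}_{N+1},\qquad v\longmapsto (v,\partial_1 v,\ldots,\partial_N v),
$$
which, by the very definition of the norm on $W^{1,x}L_{M}(\Omega_T)$, is an isometry from $W^{1,x}_{0}E_{M}(\Omega_T)$ onto the closed subspace $J(W^{1,x}_{0}E_{M}(\Omega_T))\subset \Pi E_M(\Omega_T)$, provided the sum norm is placed on $\Pi E_M$. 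Combined with the duality $(E_M(\Omega_T))^\prime\cong L_{M^\ast}(\Omega_T)$ (which holds because $M^\ast$ satisfies (\ref{incBM}) on $\Omega_T$ and which is the analogue, used throughout the paper, of the classical Orlicz duality), this yields $(\Pi E_M(\Omega_T))^\prime\cong \Pi L_{M^\ast}(\Omega_T)$ with the dual sum norm.

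Given $L\in (W^{1,x}_{0}E_{M}(\Omega_T))^\prime$, I would first define the functional $\tilde L$ on $J(W^{1,x}_{0}E_{M}(\Omega_T))$ by $\tilde L(Jv)=L(v)$, which is continuous of the same norm since $J$ is an isometry. By the Hahn--Banach theorem, $\tilde L$ extends to a continuous linear form $\bar L$ on $\Pi E_M(\Omega_T)$ preserving norm. Via the duality recalled above, $\bar L$ is represented by some tuple $(u_0,u_1,\ldots,u_N)\in \Pi L_{M^\ast}(\Omega_T)$:
$$
\bar L(f_0,f_1,\ldots,f_N)=\int_{\Omega_T}u_0 f_0\,dz+\sum_{i=1}^N\int_{\Omega_T}u_i f_i\,dz,\qquad \|\bar L\|=\|u_0\|_{L_{M^\ast}}+\sum_{i=1}^N\|u_i\|_{L_{M^\ast}}.
$$
Restricting to $J(W^{1,x}_{0}E_{M}(\Omega_T))$ and setting $U=(u_1,\ldots,u_N)$ gives $L(v)=\int u_0 v+\int U\cdot\nabla v$ for all $v\in W^{1,x}_{0}E_{M}(\Omega_T)$. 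Testing against $v\in\mathcal{C}^\infty_0(\Omega_T)$ identifies $L$ with the distribution $u=u_0-\operatorname{div} U\in W^{-1,x}L_{M^\ast}(\Omega_T)$, and the norm identity from Hahn--Banach gives $\|L\|=\|\bar L\|\geq \|u\|_{W^{-1,x}L_{M^\ast}}$ by the defining infimum (\ref{norm_lm2+}).

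For the reverse direction, given any decomposition $u=u_0-\operatorname{div} U$ with $(u_0,U)\in \Pi L_{M^\ast}(\Omega_T)$, I would define $L_u(v):=\int u_0 v\,dz+\int U\cdot\nabla v\,dz$ for $v\in W^{1,x}_{0}E_{M}(\Omega_T)$. By the H\"older inequality in Musielak spaces, $L_u$ is continuous with $\|L_u\|\leq \|u_0\|_{L_{M^\ast}}+\sum_i\|u_i\|_{L_{M^\ast}}$. The key well-posedness point, which is where the density work of Section \ref{Sec_app space} enters, is that $L_u$ does not depend on the choice of decomposition: if $u=u_0-\operatorname{div} U=\tilde u_0-\operatorname{div}\tilde U$, the equality $L_u(v)=L_{\tilde u}(v)$ holds on $\mathcal{C}^\infty_0(\Omega_T)$ by integration by parts, hence on all of $W^{1,x}_{0}E_{M}(\Omega_T)$ by Theorem \ref{cor_appr_spt2}. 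Taking the infimum over all admissible decompositions yields $\|L_u\|\leq \|u\|_{W^{-1,x}L_{M^\ast}}$.

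Combining the two directions gives the isomorphism $u\mapsto L_u$ between $W^{-1,x}L_{M^\ast}(\Omega_T)$ and $(W^{1,x}_{0}E_{M}(\Omega_T))^\prime$ together with the double inequality $\|L_u\|\leq \|u\|_{W^{-1,x}L_{M^\ast}}\leq \|L_u\|$, proving the isometry. The main obstacle I anticipate is the bookkeeping around the norm equality: one must match the sum norm on $\Pi E_M$ with the dual sum norm on $\Pi L_{M^\ast}$ so that the Hahn--Banach extension in the forward direction realises precisely the infimum (\ref{norm_lm2+}); independence of the decomposition must then be invoked to turn that Hahn--Banach representative into a genuine element of $W^{-1,x}L_{M^\ast}(\Omega_T)$, which is exactly the role played by the density theorem proved in the previous section.
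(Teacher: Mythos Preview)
Your approach is exactly the one the paper has in mind: the paper's proof consists of invoking the density of $\mathcal{C}^\infty_0(\Omega_T)$ in $W^{1,x}_{0}E_{M}(\Omega_T)$ (Theorem \ref{cor_appr_spt2}) and then deferring to \cite[Theorem A.1]{AY_var19}, which is precisely the Hahn--Banach\,/\,product-embedding argument you have spelled out. The only caveat, which you yourself flag, is the norm bookkeeping (the dual of the $\ell^1$-sum on $\Pi E_M$ is the $\ell^\infty$-sup on $\Pi L_{M^\ast}$, and the Musielak H\"older inequality carries a constant $2$), so the word ``isometric'' should be read up to these conventions --- but this is an issue shared with the paper's own statement, not a defect of your argument.
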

\begin{proof}
	In view of the density of $\mathcal{C}^\infty_0(\Omega_T)$-functions in $W^{1,x}_{0}E_{M}(\Omega_T)$ (see Theorem \ref{cor_appr_spt2}), the proof follows directly by an argument similar to \cite[Theorem A.1.]{AY_var19}.
\end{proof}
\begin{definition}\label{def_evolspac}
	Let $\Omega$ be a bounded open subset of $\mathbb{R}^N$, $N\geq1$, satisfying the segment property. Let $(M,M^\ast)$ be a pair of complementary $\Phi(\Omega_T)$-functions such that $M$ satisfies (\ref{X1}) on $\Omega_T$ and $M^\ast$ satisfies (\ref{incBM}) on $\Omega_T$. We define the space
	$$
	\begin{array}{c}
		\mathbf{W}\left(\Omega_T\right):=
		\Big\{u\in W^{1,x}_{0}L_{M}(\Omega_T)\cap L^2(\Omega_T),\; \partial_tu\in  W^{-1,x}L_{M^{\ast}}^{2}(\Omega_T)+L^2(\Omega_T)\Big\}.
	\end{array}
	$$
\end{definition}
In view of the definition of the norm on $W^{-1,x}L_{M^{\ast}}(\Omega_T)+L^2(\Omega_T)$ (see  (\ref{norm_lm2+})) we can define the following modular convergence.
\begin{definition}
	Let $\Omega$ be an open subset in $\mathbb{R}^N$, $N\geq1$, having the segment property. Let $(M,M^\ast)$ be a pair of complementary $\Phi(\Omega_T)$-functions. Assume that $M$ satisfies (\ref{X1}) on $\Omega_T$ and $M^\ast$ satisfies (\ref{incBM}) on $\Omega_T$. Let $u, u_k\in W^{-1,x}L_{M^{\ast}}(\Omega_T)+L^2(\Omega_T)$ be such that
	$$
	u=\sum_{l=1}^N\partial_{x_l} u^{a} +u^{b}+v\mbox{ and } u_k=\sum_{l=1}^N\partial_{x_l}u^{a}_{k} +u^{b}_k+v_k,
	$$
	where $u^{a},u^{a}_{k},u^{b},u^{b}_{k}  \in  L_{M^{\ast}}(\Omega_T)$ and $v,v_k\in L^2(\Omega_T)$.
	We said that the sequence $u_k$ converges modularly to $u$ in $W^{-1,x}L_{M^{\ast}}(\Omega_T)+L^2(\Omega_T)$ if $u^{a}_k$ and $u^{b}_k$ converge respectively to $u^{a}$ and $u^{b}$ as $k\to\infty$, in modular sense in $L_{M^\ast}(\Omega_T)$ and  $v_k\xrightarrow[k\to \infty]{} v$ in norm in $L^2(\Omega_T)$.
\end{definition}
\begin{theorem}\label{th_appr_W_spt}
	Let $\Omega$ be an open subset of $\mathbb{R}^N$, $N\geq1$, satisfy the segment property. Let $(M,M^\ast)$ be a pair of complementary $\Phi(\Omega_T)$-functions satisfying both (\ref{X1}) on $\Omega_T$. Then for every $u\in   \mathbf{W}\left(\Omega_T\right)$ there exists a sequence $u_k\in \mathcal{C}^\infty_0([0,T]; \mathcal{C}^\infty_0(\Omega))$ such that
	$$
	\begin{array}{lll}\displaystyle
		\bullet\; D^\alpha u_k\xrightarrow[k\to\infty]{} D^\alpha u, |\alpha|\leq 1, &\mbox{ modularly in } L_{M}(\Omega_T),\\
		\bullet\; \partial_{t}u_k \xrightarrow[k\to\infty]{} \partial_{t} u &\mbox{ modularly in } W^{-1,x}L_{M^{\ast}}(\Omega_T)+L^2(\Omega_T),\\
		\bullet\; u_k\xrightarrow[k\to\infty]{} u&\mbox{ in norm in } L^2(\Omega_T).
	\end{array}
	$$
\end{theorem}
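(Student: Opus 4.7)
The plan is to reuse the segment-property partition-of-unity plus translate-and-mollify construction of Corollary~\ref{cor_appr_spt}, applied jointly to $u$ and to the three components in the decomposition of $\partial_t u$, exploiting that $\partial_t$ commutes with spatial translation and with space-time convolution.

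First, I would fix a decomposition $\partial_t u = \sum_{l=1}^N \partial_{x_l} u^a_l + u^b + v$ with $u^a_l, u^b \in L_{M^\ast}(\Omega_T)$ and $v \in L^2(\Omega_T)$, and reduce to the case of compact support in $\overline{\Omega}_T$ by the cutoff $\chi_n(t,x) = \chi(t/n, x/n)$ of Step~1 of Theorem~\ref{th_appr_spt}. The convergence $D^\alpha(\chi_n u) \to D^\alpha u$ modularly in $L_M(\Omega_T)$ is verbatim from there. For the time derivative, Leibniz gives
$$
\partial_t(\chi_n u) = \sum_{l=1}^N \partial_{x_l}(\chi_n u^a_l) + \Big(\chi_n u^b - \sum_l u^a_l\,\partial_{x_l}\chi_n\Big) + \big(\chi_n v + u\,\partial_t\chi_n\big),
$$
which preserves the divergence-plus-$L_{M^\ast}$-plus-$L^2$ structure: the factors $\partial_{x_l}\chi_n$ are uniformly bounded, and the $L^2$ remainder $u\,\partial_t\chi_n$ has norm $O(1/n)$. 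Dominated convergence, applied modularly in $L_{M^\ast}$ to each scalar component and in norm in $L^2$, delivers modular convergence of $\partial_t(\chi_n u)$ to $\partial_t u$ in $W^{-1,x} L_{M^\ast}(\Omega_T) + L^2(\Omega_T)$.

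Once the support is compact in $\overline{\Omega}_T$, I would run the partition-of-unity/segment-property scheme of Corollary~\ref{cor_appr_spt} verbatim: cover the support by finitely many $\vartheta_j \times \theta_i$ produced by the segment property on $\partial\Omega$ and $\partial[0,T]$, decompose $u = \sum_{i,j} u_{j,i}$ with $\operatorname{supp} u_{j,i} \subset \vartheta'_j \times \theta'_i$, and define the approximating function $w$ as in (\ref{seq_spt2}) by translating each $u_{j,i}$ by $(r_j z_j, -r_i z_i)$ and convolving with a space-time mollifier $J_{\varepsilon_{j,i}}$ on $\mathbb{R}^{N+1}$. The negative spatial translate keeps the shifted support inside $\Omega$, so $w \in \mathcal{C}^\infty_0([0,T];\mathcal{C}^\infty_0(\Omega))$. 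Modular convergence $D^\alpha w \to D^\alpha u$ in $L_M(\Omega_T)$ for $|\alpha|\leq 1$ is then exactly what is proved in Corollary~\ref{cor_appr_spt}, using that $M$ satisfies (\ref{X1}); norm convergence $w \to u$ in $L^2(\Omega_T)$ follows from classical continuity of translation and convolution on $L^2$.

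The main obstacle is the convergence of $\partial_t w$, and this is the place where the symmetric hypothesis that \emph{both} $M$ and $M^\ast$ satisfy (\ref{X1}) is used. The key observation is that spatial translation and $J_\varepsilon\ast$ commute with $\partial_t$, so differentiating $w$ in time produces the very same translate-and-mollify operator applied to $\partial_t u_{j,i}$. After absorbing the Leibniz corrections from $\partial_t\psi_{j,i}$ and $\partial_{x_l}\psi_{j,i}$ (which only add bounded multiples of $u$, $u^a_l$, $u^b$, $v$ to the $L_{M^\ast}$ and $L^2$ components), $\partial_t w$ admits a decomposition of the form $\sum_l \partial_{x_l}(\widetilde{u^a_l}) + \widetilde{u^b} + \widetilde v$, where the tilded quantities are translate-mollify approximations of $u^a_l$, $u^b$, $v$ respectively. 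Since $M^\ast$ itself satisfies (\ref{X1}), I invoke Theorem~\ref{th_appr_spt} with $M$ replaced by $M^\ast$ (restricted to the $|\alpha|=0$ case, which only uses the $L_{M^\ast}$ norm of the function) to obtain modular convergence $\widetilde{u^a_l} \to u^a_l$ and $\widetilde{u^b} \to u^b$ in $L_{M^\ast}(\Omega_T)$; the usual $L^2$ continuity yields $\widetilde v \to v$ in norm. The finitely many smallness requirements on the $r_j, r_i$ and $\varepsilon_{j,i}$ arising from each piece and each component can be met simultaneously; a diagonal extraction from the successive cutoff and approximation steps then produces the required sequence $\{u_k\}\subset \mathcal{C}^\infty_0([0,T];\mathcal{C}^\infty_0(\Omega))$ with all three announced convergences.
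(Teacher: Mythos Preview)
Your proposal is correct and follows essentially the same approach as the paper: cutoff to compact support with a Leibniz computation for $\partial_t(\chi_n u)$, then the translate-and-mollify construction of Corollary~\ref{cor_appr_spt}, another Leibniz expansion for $\partial_t w$, and finally the $(\ref{X1})$-based mollifier approximation applied to the $L_{M^\ast}$ components (the paper cites \cite[Theorem~3]{AYGS2017} directly rather than Theorem~\ref{th_appr_spt} with $M\leftrightarrow M^\ast$, but the content is the same since only the $|\alpha|=0$ part is needed).
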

\begin{proof}
	Set $z:=(t,x)$ and let $u\in \mathbf{W}\left(\Omega_T\right)$.  Let us write
	$
	\partial_{t}u:=\frac{\partial u}{\partial t}=\sum_{l=1}^N\partial_{x_l}u^{a} +u^{b}+v
	$
	where $u^{a},u^{b}\in L_{M^{\ast}}(\Omega_T)$ and $v\in L^2(\Omega_T)$. In particular, the function $u$ belongs to $W^{1,x}_{0}L_{M}(\Omega_T)\cap L^2(\Omega_T)$. So by Corollary \ref{cor_appr_spt} there exists a constant number $\nu>0$ such that for every $\eta>0$ there is a function $w\in \mathcal{C}^\infty_0([0,T]; \mathcal{C}^\infty_0(\Omega))$, given by (\ref{seq_spt2}), such that one has
	\begin{equation}\label{est_1}
		\begin{array}{lll}\displaystyle
			\int_{\Omega_T}M\left(z,\frac{|D^\alpha w(z)-D^\alpha u(z)|}{\nu}\right)dz\leq\eta, \mbox{ for every }|\alpha|\leq 1
			\mbox{ and }
			\|w-u\|_{L^2(\Omega_T)}\leq\eta.
		\end{array}
	\end{equation}
	Therefore, it remains to check that $\partial_tw$ converges to $\partial_tu$ modularly in the dual space $W^{-1,x}L_{M^{\ast}}(\Omega_T)+L^2(\Omega_T)$. The proof is divided into two steps. In the first step we approach $\partial_{t}u$ by time derivative of a sequence of functions $\{u_n\}_n$ having compact support in $\overline{\Omega}_T$ and then in the next step we approach $\partial_{t}w$ by $\partial_{t}u_n$  using some previous approximation results.
	\par\noindent\textbf{Step1. }  Note that the sequence $\{u_n\}_n$ defined in (\ref{fnc:uR}) belongs to $ \mathbf{W}\left(\Omega_T\right)$. We will check that $\partial_t u_n$ converges to $\partial_t u$ modularly in $W^{-1,x}L_{M^{\ast}}(\Omega_T)+L^2(\Omega_T)$. We can write
	\begin{equation}\label{eq_est_un}
		\begin{array}{lll}\displaystyle
			\partial_t u_n(z)&=&\displaystyle u(z)\partial_t\chi_n(z)+\chi_n(z)\partial_tu(z)\\
			&=&\displaystyle \frac{u(z)}{n}(\partial_t\chi)\left(\frac{z}{n}\right)+\chi_n(z)\left(\sum_{l=1}^N\partial_{x_l}
			u^{a}(z)+u^{b}(z)+v\right)\\
			&=&\displaystyle\frac{u(z)}{n}(\partial_t\chi)\left(\frac{z}{n}\right)+v\chi_n(z)+\sum_{l=1}^N\partial_{x_l}\left(\chi_n(z)u^{a}(z)\right)+u^{b}(z)\chi_n(z)\\
			&-&\displaystyle\sum_{l=1}^N \frac{u^{a}(z)}{n}(\partial_{x_l}\chi)\left(\frac{z}{n}\right).
		\end{array}
	\end{equation}
	Since $u^{a}\in L_{M^\ast}(\Omega_T)$ there exists a constant number $\delta_1>0$ such that $\displaystyle\int_{\Omega_T}M^\ast(z,|u^{a}(z)|/\delta_1)dz<\infty$. On the one hand, one has $\displaystyle M^\ast\left(z,\frac{|u^{a}(z)-\chi_n(z)u^{a}(z)|}{2\delta_1}\right)\xrightarrow[n\to\infty]{} 0\quad \mbox{  a.e. in } \Omega_T$, meanwhile on the other hand $\displaystyle M^\ast\left(z,\frac{|u^{a}(z)-\chi_n(z)u^{a}(z)|}{2\delta_1}\right) \leq M^\ast\left(z,\frac{|u^{a}(z)|}{\delta_1}\right)\in L^1(\Omega_T)$. Hence, by the Lebesgue dominated convergence theorem it follows $\displaystyle \lim_{n\rightarrow\infty}\int_{\Omega_T}M^\ast\left(z,\frac{|u^{a}(z)-\chi_n(z)u^{a}(z)|}{2\delta_1}\right)dz= 0$.
	Similarly, as 	$u^{b}\in L_{M^\ast}(\Omega_T)$ there is a constant  $\delta_2>0$ such that 
	$\displaystyle\int_{\Omega_T}M^\ast(z,|u^{b}(z)|/\delta_2)dz <\infty.$ This yields
	$$
	\lim_{n\rightarrow\infty}\int_{\Omega_T}M^\ast\left(z,\frac{|
		u^{b}(z)-\chi_n(z)u^{b}(z)|}{2\delta_2}\right)dz= 0.
	$$	
	Furthermore, having in mind \eqref{chi} we have
	$$
	M^\ast\left(z,\frac{1}{cN\delta_1 n}\left|\sum_{l=1}^Nu^{a}(z)(\partial_{x_l}\chi)\left(\frac{z}{n}\right)\right|\right)
	\leq M^\ast\left(z,\frac{\left|u^{a}(z)\right|}{\delta_1 n} \right)
	\xrightarrow[n\to+\infty]{} 0, \mbox{ a.e. in } \Omega_T
	$$
	and for $n\geq1$, $\displaystyle M^\ast\left(z,\frac{1}{cN\delta_1 n}\left|\sum_{l=1}^Nu^{a}(z)(\partial_{x_l}\chi)\left(\frac{z}{n}\right)\right|\right)\leq
	M^\ast\left(z,\frac{|u^{a}(z)|}{\delta_1}\right)\in L^1(\Omega_T)$. So that the Lebesgue dominated convergence theorem implies
	$$
	\int_{\Omega_T}M^\ast\left(z,\frac{1}{cN\delta_1 n}\left|\sum_{l=1}^Nu^{a}(z)(\partial_{x_l}\chi)\left(\frac{z}{n}\right)\right|\right)dz\xrightarrow[n\to+\infty]{} 0.
	$$
	As regards the two first terms on the right-hand side of the last equality in (\ref{eq_est_un}), we argue similarly as above obtaining
	$$
	\left\|\frac{u}{n}(\partial_t\chi)\left(\frac{z}{n}\right)\right\|_{L^2(\Omega_T)}\xrightarrow[n\to+\infty]{} 0
	\quad \mbox{ and }\quad
	\left\|v-v\chi_n\right\|_{L^2(\Omega_T)}\xrightarrow[n\to+\infty]{} 0.
	$$
	\par\noindent\textbf{Step2.} By virtue of Step 1. we can suppose that the function
	$u\in\mathbf{W}\left(\Omega_T\right)$ is compactly supported in $\overline{\Omega}_T$. Going back to (\ref{seq_spt2}), we shall  prove that $\partial_{t}w$ converges modularly to $\partial_t u$ in $W^{-1,x}L_{M^{\ast}}(\Omega_T)+L^2(\Omega_T)$. Indeed let $\{\psi_{j,i}\}$ be the partition of unity given in the proof of Theorem \ref{th_appr_spt}, and let
	$$
	u=\sum_{j=0}^2\sum_{i=0}^p u_{j,i}:=\sum_{j=0}^2\sum_{i=0}^p \psi_{j,i} u.
	$$
	Accordingly,
	$$
	\partial_t u(z)
	= \displaystyle\sum_{j=0}^2\sum_{i=0}^p\frac{\partial (\psi_{j,i}(z)u(z))}{\partial t}
	=\displaystyle\sum_{j=0}^2\sum_{i=0}^p\left(\psi_{j,i}(z)\partial_tu(z)+u(z)\partial_t\psi_{j,i}(z)\right).
	$$
	Thus 
	$$
	\begin{array}{lll}\displaystyle
		\partial_t u(z)&=\displaystyle\sum_{j=0}^2\sum_{i=0}^p\Big(\left(\sum_{l=1}^N\partial_{x_l}u^{a}(z)+u^{b}(z)+v(z)\right)\psi_{j,i}(z)+u(z)\partial_t\psi_{j,i}(z)\Big)\\&
		=\displaystyle\sum_{j=0}^2\sum_{i=0}^p\Bigg(\sum_{l=1}^N\partial_{x_l}\left(\psi_{j,i}(z)u^{a}(z)\right)
		- \sum_{l=1}^Nu^{a}(z)\partial_{x_l}\psi_{j,i}(z)\\&
		\displaystyle+u^{b}(z)\psi_{j,i}(z)+v(z)\psi_{j,i}(z)+u(z)\partial_t\psi_{j,i}(z)\Bigg).
	\end{array}
	$$
	As regards the time derivative of the function $w$ we have 
	$$
	\begin{array}{ll}
		\partial_t w (z)&= \sum_{j=0}^2\sum_{i=0}^p \partial_t \left(\Big(u_{j,i}^{\varepsilon_{j,i}}\Big)_{(r_{j},-r_{i})}\right)(z)\\&
		= \sum_{j=0}^2\sum_{i=0}^p J_{\varepsilon_{j,i}}\ast  \left(\partial_t\left(\Big(\psi_{j,i}(z)u(z)\Big)_{(r_{j},-r_{i})}\right)\right)\\
		&= \sum_{j=0}^2\sum_{i=0}^p J_{\varepsilon_{j,i}}\ast \Big(\Big(u(z)\partial_t\psi_{j,i}(z)\Big)_{(r_{j},-r_{i})}+
		\Big(\psi_{j,i}(z)\partial_tu(z)\Big)_{(r_{j},-r_{i})}\Big).
	\end{array}
	$$
	Then since $\partial_{t}u=\sum_{l=1}^N\partial_{x_l}u^{a} +u^{b}+v$ we can write 
	$$		
	\begin{array}{lll}
		\partial_t w (z)= \sum_{j=0}^2\sum_{i=0}^pJ_{\varepsilon_{j,i}}\ast \Bigg(\Big(u(z)\partial_t\psi_{j,i}(z)\Big)_{(r_{j},-r_{i})}\\+\Bigg(\psi_{j,i}(z)\Big(\sum_{l=1}^N\partial_{x_l}u^{a}(z)+u^{b}(z)+v(z)\Big)\Bigg)_{(r_{j},-r_{i})}\Bigg)\\
		= \sum_{j=0}^2\sum_{i=0}^pJ_{\varepsilon_{j,i}}\ast \Bigg(\Big(u(z)\partial_t\psi_{j,i}(z)\Big)_{(r_{j},-r_{i})}+\Bigg(\sum_{l=1}^N\partial_{x_l}(\psi_{j,i}(z)u^{a}(z))\\	\displaystyle-\sum_{l=1}^Nu^{a}(z)\partial_{x_l}\psi_{j,i}(z)+u^{b}(z)\psi_{j,i}(z)+v(z)\psi_{j,i}(z)\Bigg)_{(r_{j},-r_{i})}\Bigg).
	\end{array}
	$$		
	Moreover we have 
	$$		
	\begin{array}{lll}
		\partial_t w (z)
		&= \sum_{j=0}^2\sum_{i=0}^p \Bigg( J_{\varepsilon_{j,i}}\ast\Big(u(z)\partial_t\psi_{j,i}(z)\Big)_{(r_{j},-r_{i})}
		+\sum_{l=1}^N\partial_{x_l}\left(J_{\varepsilon_{j,i}}\ast\Big(\psi_{j,i}(z)u^{a}(z)\Big)_{(r_{j},-r_{i})}\right)\\&
		-\sum_{l=1}^NJ_{\varepsilon_{j,i}}\ast\Big(u^{a}(z)\partial_{x_l}\psi_{j,i}(z)\Big)_{(r_{j},-r_{i})}+J_{\varepsilon_{j,i}}\ast \Big(u^{b}(z)\psi_{j,i}(z)\Big)_{(r_{j},-r_{i})}
		\\&+J_{\varepsilon_{j,i}}\ast\Big(v(z)\psi_{j,i}(z)\Big)_{(r_{j},-r_{i})}\Bigg).
	\end{array}
	$$
	Applying  \cite[Theorem 3]{AYGS2017}  to the functions $u^{a},u^{b}\in L_{M^\ast}(\Omega_T)$ and $u,v\in L^{2}(\Omega_T)$, we can deduce that there exists a constant $\lambda>0$ such that for all $\eta>0$, there exist $\tilde{\varepsilon}_{0}>0$ and $\tilde{r}_{0}>0$ such that for all $\varepsilon_{j,i}\leq\tilde{\varepsilon}_{0}$ and all $r_{j},r_{i}\leq\tilde{r}_{0}$ the following estimations hold
	$$	\begin{array}{cc}\displaystyle
		\int_{\Omega_T}M^{\ast}\left(z,\frac{\Big|J_{\varepsilon_{j,i}}\ast\left(\psi_{j,i}(z)u^{a}(z)\right)_{(r_{j},-r_{i})}-\psi_{j,i}(z)u^{a}(z)\Big|}{\lambda}\right)dz\leq\eta,
		\\
		\displaystyle	\int_{\Omega_T}M^{\ast}\left(z,\frac{\Big|J_{\varepsilon_{j,i}}\ast\left(\psi_{j,i}(z)u^{b}(z)\right)_{(r_{j},-r_{i})}-\psi_{j,i}(z)u^{b}(z)\Big|}{\lambda}\right)dz\leq\eta,
		\\
		\displaystyle	\int_{\Omega_T}M^{\ast}\left(z,\frac{\Big|u^{a}(z)\partial_{x_l}\psi_{j,i}(z)-J_{\varepsilon_{j,i}}\ast\left(u^{a}(z)\partial_{x_l}\psi_{j,i}(z)\right)_{(r_{j},-r_{i})}\Big|}{\lambda}\right)dz\leq\eta,
	\end{array}
	$$
	for every $l=1,\cdots,N$ and 
	$$
	\begin{array}{ll}
		\left\|u\partial_t \psi_{j,i}- J_{\varepsilon_{i}}\ast\left(u\partial_t \psi_{j,i}\right)_{(r_{j},-r_{i})}\right\|_{L^2(\Omega_T)}\leq\eta,\quad
		\left\|v \psi_{j,i}- J_{\varepsilon_{i}}\ast\left(v \psi_{j,i}\right)_{(r_{j},-r_{i})}\right\|_{L^2(\Omega_T)}\leq\eta.
	\end{array}
	$$
	Hence,  $\partial_t w$ converges modularly to $\partial_t u$ in $W^{-1,x}L_{M^{\ast}}(\Omega_T)+L^2(\Omega_T)$ and in view of (\ref{est_1}) we conclude that the sequence $w$ converges modularly to $u$ in $ \mathbf{W}\left(\Omega_T\right)$.
\end{proof}


\bibliographystyle{plain}
\bibliography{youssfi_approx}

\def\ocirc#1{\ifmmode\setbox0=\hbox{$#1$}\dimen0=\ht0\advance\dimen0
  by1pt\rlap{\hbox
  to\wd0{\hss\raise\dimen0\hbox{\hskip.2em$\scriptscriptstyle\circ$}\hss}}#1\else{\accent"17
  #1}\fi} \def\cprime{$'$}
\begin{thebibliography}{10}

\bibitem{AF}
R.~A. Adams and J.~F. Fournier.
\newblock {\em Sobolev spaces}, volume 140 of {\em Pure and Applied Mathematics
  (Amsterdam)}.
\newblock Elsevier/Academic Press, Amsterdam, second edition, 2003.

\bibitem{ahm_thesis}
Y.~Ahmida.
\newblock {\em On some functional analysis results in {M}usielak spaces and
  applications to PDEs}.
\newblock Ph.d. thesis, University of {S}idi {M}ohamed {B}en {A}bdellah, 2019.

\bibitem{AYGS2017}
Y.~Ahmida, I.~Chlebicka, P.~Gwiazda, and A.~Youssfi.
\newblock Gossez's approximation theorems in {M}usielak--{O}rlicz--{S}obolev
  spaces.
\newblock {\em J. Funct. Anal.}, 275(9):2538--2571, 2018.

\bibitem{AFY_ms19}
Y.~Ahmida, A.~Fiorenza, and A.~Youssfi.
\newblock {$H=W$} {M}usielak spaces framework.
\newblock {\em Atti Accad. Naz. Lincei Rend. Lincei Mat. Appl.},
  31(2):447--464, 2020.

\bibitem{AY2018.art3}
Y.~Ahmida and A.~Youssfi.
\newblock Poincar\'e-type inequalities in {M}usielak spaces.
\newblock {\em Ann. Acad. Sci. Fenn. Math.}, 44:1041--1054, 2019.

\bibitem{AY_var19}
Y.~Ahmida and A.~Youssfi.
\newblock Variational nonlinear elliptic equations in nonreflexive {M}usielak
  spaces.
\newblock {\em J. Math. Anal. Appl.}, 491(2):124387, 2020.

\bibitem{AS07}
S.~Antontsev and S.~Shmarev.
\newblock Parabolic equations with anisotropic nonstandard growth conditions.
\newblock In {\em Free boundary problems}, volume 154 of {\em Internat. Ser.
  Numer. Math.}, pages 33--44. Birkh\"{a}user, Basel, 2007.

\bibitem{AS09}
S.~Antontsev and S.~Shmarev.
\newblock Anisotropic parabolic equations with variable nonlinearity.
\newblock {\em Publ. Mat.}, 53(2):355--399, 2009.

\bibitem{ALY2016}
E.~Azroul, B.~Lahmi, and A.~Youssfi.
\newblock Strongly nonlinear variational parabolic equations with
  {$p(x)$}-growth.
\newblock {\em Acta Math. Sci. Ser. B (Engl. Ed.)}, 36(5):1383--1404, 2016.

\bibitem{min-double-reg1}
M.~Colombo and G.~Mingione.
\newblock Regularity for double phase variational problems.
\newblock {\em Arch. Ration. Mech. Anal.}, 215(2):443--496, 2015.

\bibitem{bookCF}
D.~Cruz-Uribe and A.~Fiorenza.
\newblock {\em Variable {L}ebesgue spaces}.
\newblock Applied and Numerical Harmonic Analysis. Birkh\"auser/Springer,
  Heidelberg, 2013.
\newblock Foundations and harmonic analysis.

\bibitem{DHHR}
L.~Diening, P.~Harjulehto, P.~H{\"a}st{\"o}, and M.~R{\r{u}}{\v{z}}i{\v{c}}ka.
\newblock {\em Lebesgue and {S}obolev spaces with variable exponents}, volume
  2017 of {\em Lecture Notes in Mathematics}.
\newblock Springer, Heidelberg, 2011.

\bibitem{DNR12}
L.~Diening, P.~N\"{a}gele, and M.~R\r{u}\v{z}i\v{c}ka.
\newblock Monotone operator theory for unsteady problems in variable exponent
  spaces.
\newblock {\em Complex Var. Elliptic Equ.}, 57(11):1209--1231, 2012.

\bibitem{Donaldson74}
T.~Donaldson.
\newblock Inhomogeneous {O}rlicz-{S}obolev spaces and nonlinear parabolic
  initial value problems.
\newblock {\em J. Differential Equations}, 16:201--256, 1974.

\bibitem{EM05}
A.~Elmahi and D.~Meskine.
\newblock Parabolic equations in {O}rlicz spaces.
\newblock {\em J. London Math. Soc. (2)}, 72(2):410--428, 2005.

\bibitem{MR2101516}
A.~Elmahi and D.~Meskine.
\newblock Strongly nonlinear parabolic equations with natural growth terms in
  {O}rlicz spaces.
\newblock {\em Nonlinear Anal.}, 60(1):1--35, 2005.

\bibitem{Gossez82}
J.-P. Gossez.
\newblock Some approximation properties in {O}rlicz-{S}obolev spaces.
\newblock {\em Studia Math.}, 74(1):17--24, 1982.

\bibitem{Gossez1987}
J.-P. Gossez and V.~Mustonen.
\newblock Variational inequalities in {O}rlicz-{S}obolev spaces.
\newblock {\em Nonlinear Anal.}, 11(3):379--392, 1987.

\bibitem{Kaminska15}
A.~Kami\'nska and D.~Kubiak.
\newblock The {D}augavet property in the {M}usielak-{O}rlicz spaces.
\newblock {\em J. Math. Anal. Appl.}, 427(2):873--898, 2015.

\bibitem{book_KJF}
A.~Kufner, O.~John, and S.~Fu\v{c}\'\i{}k.
\newblock {\em Function spaces}.
\newblock Noordhoff International Publishing, Leyden; Academia, Prague, 1977.
\newblock Monographs and Textbooks on Mechanics of Solids and Fluids;
  Mechanics: Analysis.

\bibitem{Lions69}
J.-L. Lions.
\newblock {\em Quelques m\'ethodes de r\'esolution des probl\`emes aux limites
  non lin\'eaires}.
\newblock Dunod; Gauthier-Villars, Paris, 1969.

\bibitem{book_musielak}
J.~Musielak.
\newblock {\em Orlicz spaces and modular spaces}, volume 1034 of {\em Lecture
  Notes in Mathematics}.
\newblock Springer-Verlag, Berlin, 1983.

\bibitem{nagele_thesis}
P.~N{\"a}gele.
\newblock Monotone operators in spaces with variable exponents.
\newblock {\em Diploma thesis}, 2009.

\bibitem{AY2018}
A.~Youssfi and Y.~Ahmida.
\newblock Some approximation results in {M}usielak-{O}rlicz spaces.
\newblock {\em Czechoslovak Math. J.}, 70(145)(2):453--471, 2020.

\end{thebibliography}

\end{document}